\newtheorem{theorem}{Theorem}[section]
\newtheorem{lemma}{Lemma}[section]
\newtheorem{proposition}{Proposition}[section]
\newtheorem{definition}{Definition}[section]
\newtheorem{example}{Example}[section]
\newcommand{\leqnomode}{\tagsleft@true}
\newcommand{\reqnomode}{\tagsleft@false}
\newcommand{\nequiv}{\mathrel{\not\equiv}}
\newcommand{\colonequal}{\mathrel{\mathop:}=}
\DeclareMathOperator{\dens}{dens}
\newcounter{tbox}
\newcommand{\mylongtitle}[1]{%
  \ifodd\value{page}%
    \protect\parbox{0.97\linewidth}{#1}\hfill%
  \else%
    \hfill\protect\parbox{0.97\linewidth}{#1}%
  \fi%
}
\newcommand{\otherlabel}[2]{\protected@edef\@currentlabel{#2}\label{#1}}
\mathchardef\mh="2D
\title[Improvement Ergodic Theory For The Infinite Word $\mathfrak{F}=\mathfrak{F}_{b}:=\left({ }_{b} f_{n}\right)_{n \geqslant 0}$ on Fibonacci Density]{Improvement Ergodic Theory For The Infinite Word $\mathfrak{F}=\mathfrak{F}_{b}:=\left({ }_{b} f_{n}\right)_{n \geqslant 0}$ on Fibonacci Density}
\author{Jasem Hamoud$^{a}$}
\author{Duaa Abdullah $^{b}$}
\thanks{$^{a,b}$Physics and Technology School of Applied Mathematics and Informatics, Moscow Institute of Physics and Technology (MIPT), Moscow, Russia}
\thanks{I would like to thank prof.Mahdi Meisami, University of Isfahan (Isfahan, Iran).}
\date {\today}
\begin{document}

\maketitle

\begin{abstract}
The paper explores combinatorial properties of Fibonacci words and their generalizations within the framework of combinatorics on words. These infinite sequences, measures the diversity of subwords in Fibonacci words, showing non-decreasing growth for infinite sequences.  Extends factor analysis to arithmetic progressions of symbols, highlighting generalized pattern distributions.  Recent results link Sturmian sequences (including Fibonacci words) to unbounded binomial complexity and gap inequivalence, with implications for formal language theory and automata.
This work underscores the interplay between substitution rules, algebraic number theory, and combinatorial complexity in infinite words, providing tools for applications in fractal geometry and theoretical computer science.

\medskip
\noindent {\bf Keywords:} Density; Fibonacci, Word, Ergodic, Sequence 
\smallskip

\noindent {\bf AMS Subject Classification (2020):} 68R15; 05C42; 11B05; 11R45; 11B39.
\end{abstract}

\section{Introduction}
Throughout this paper, a fascinating branch of mathematics called combinatorics on words studies the forms and characteristics of sequences of symbols, or ``words'', created from a finite alphabet. This area of study is vital to comprehending the intricacy and patterns seen in symbol strings since it interacts with formal language theory, automata theory, and number theory, among other fields (see~\cite{R3,Y4}).

A nonempty finite set $\Sigma$ is called an alphabet. The elements of the set $\Sigma$ are called letters. The alphabet consisting of $b$ symbols from 0 to $b-1$ will then be denoted by $\Sigma_{b}=\{0, \ldots, b-1\}$. A word $\mathbf{w}$ is a sequence of letters. The finite word $\mathbf{w}$ can be considered as a function of $\mathbf{w}:\{1, \cdots,|\mathbf{w}|\} \rightarrow \Sigma$, where $\mathbf{w}[i]$ is the letter in the $i^{\text {th }}$ position. The length of the word $|\mathbf{w}|$ is the number of letters contained in it. The empty word is denoted by $\varepsilon$. Then we introduce infinite words as functions $\mathbf{w}: \mathbb{N} \rightarrow \Sigma$. The set of all finite words over $\Sigma$ is denoted by $\Sigma^{*}$, and $\Sigma^{+}=\Sigma^{*} \backslash\{\varepsilon\}$; the set of all infinite words is denoted by $\Sigma^{\mathbb{N}}$.

The concatenation of the finite words $\mathbf{U}=\mathbf{U}[1] \cdots \mathbf{U}[n],|\mathbf{U}|=n$ and $\mathbf{w}=\mathbf{w}[1] \cdots \mathbf{w}[m]$, $|\mathbf{w}|=m$ is the word

$$
\mathbf{s}=\mathbf{U} \mathbf{w}=\mathbf{U}=\mathbf{U}[1] \cdots \mathbf{u}[n] \mathbf{w}[1] \cdots \mathbf{w}[m], \quad|\mathbf{s}|=|\mathbf{u}|+|\mathbf{w}|=n+m .
$$

Let $\mathbf{U}$ and $\mathbf{w}$ be two words. If there are words $\mathbf{S}$ and $\mathbf{v}$ such that $\mathbf{w}=\mathbf{S U v}$, then the word $\mathbf{U}$ is called a factor of the word $\mathbf{w}$. The set of all factors of $\mathbf{w}$ is denoted by $\mathcal{L}_{\mathbf{w}}$ and it is called language generated by $\mathbf{W}$. If $\mathbf{s}=\varepsilon$, then $\mathbf{U}$ is called a prefix of the word $\mathbf{w}$, if $\mathbf{V}=\varepsilon$, it is named a suffix. The factor $\mathbf{w}[i] \mathbf{w}[i+1] \cdots \mathbf{w}[j]$ where $i \leqslant j$ is denoted by $\mathbf{w}[i \cdots j]$ (See~\cite{path00}).
Let $\mathcal A=\{a_0,\dots,a_m\}$ be a finite alphabet, whose elements are called in the following \emph{digits}, and let $\mathcal A^*$ be the set of finite words over $\mathcal A$ (see~\cite{path1}). A \emph{substitution} is a function $\sigma: \mathcal A^*\to\mathcal A^*$ such that $\sigma(uv)=\sigma(u)\sigma(v)$ for all words $u,v\in \mathcal A^*$ and such that $\sigma(a)$ is not the empty word for every letter $a\in\mathcal A$.
The domain $\mathcal{A}^*$ of a substitution $\sigma$ can naturally be extended to the set of infinite sequences $\mathcal{A}^{\mathbb{N}}$ and to set of bi-infinite sequences $\mathcal{A}^{\mathbb{Z}}$ by concatenation.
In particular if $\mathbf w=w_1w_2\cdots \in \mathcal{A}^{\mathbb{N}}$ then $\sigma(\mathbf w)=\sigma(w_1)\sigma(w_2)\cdots$ and if $\mathbf w=\cdots w_{-1}.\,w_0w_1\cdots \in \mathcal{A}^{\mathbb{Z}}$ then $\sigma(\mathbf w)=\cdots \sigma(w_{-1}).\,\sigma(w_0)\sigma(w_1)\cdots$. 
In this paper we shall consider the alphabets $\mathcal{A}_m:=\{1,\dots,m\}$ with $m\geq 2$ (see~\cite{R3, path1, path00}).
\begin{definition}\cite{path00}
    The factor complexity of a finite or infinite word $\mathbf{W}$ is the function $k \mapsto \mathrm{p}_{\mathbf{w}}(k)$, which, for each integer $k$, give the number $\mathrm{p}_{\mathbf{w}}(k)$ of distinct factors of length $n$ in that word.
\end{definition}
 
It is clear that the factor complexity is between zero and $(\# \Sigma)^{k}$. If $\mathrm{p}_{\mathbf{w}}(k)=(\# \Sigma)^{k}$, then the word $\mathbf{W}$ is said to have full factor complexity. This kind of words are called disjunctive word.

It is also easy to see that the factor complexity of any infinite word is a non-decreasing function, and the complexity of a finite word first increases, then decreases to zero.

\begin{definition}\cite{path00}
    The arithmetic complexity of an infinite word is the function that counts the number of words of a specific length composed of letters in arithmetic progression (and not only consecutive). In fact, it's a generalization of the complexity function.
\end{definition}

\begin{proposition}\cite{path1}
For every $m\geq 2$, The substitution $\sigma_2$ is called \emph{Fibonacci substitution} and the substitution $\sigma_3$ is called \emph{Tribonacci substitution}. As anticipated in the Introduction, these substitutions have a preeminent role in fractals and quasicrystalline structures. 
\end{proposition}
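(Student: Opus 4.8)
The plan is to read this statement not as an isolated identity to be manipulated but as the assertion that, within the family $(\sigma_m)_{m\geq 2}$ of $m$-bonacci substitutions on $\mathcal{A}_m=\{1,\dots,m\}$, the two lowest cases $\sigma_2$ and $\sigma_3$ are legitimate (non-erasing) substitutions in the sense fixed in the preliminaries, and that their structural invariants — primitivity of the incidence matrix and the Pisot property of its dominant eigenvalue — are exactly what places them at the origin of the Rauzy-fractal construction and of the cut-and-project models of quasicrystals. So the proof is a verification organized in three stages.

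First I would make the data explicit: $\sigma_2\colon 1\mapsto 12,\ 2\mapsto 1$ and $\sigma_3\colon 1\mapsto 12,\ 2\mapsto 13,\ 3\mapsto 1$ (the general rule being $i\mapsto 1(i{+}1)$ for $i<m$ and $m\mapsto 1$), and check directly that each letter has nonempty image. By the definition of substitution recalled above, together with its extension to $\mathcal{A}^{\mathbb N}$ by concatenation, each $\sigma_m$ is then a well-defined endomorphism of the free monoid; since $\sigma_m(1)$ begins with the letter $1$, iterating $\sigma_m$ on $1$ produces a well-defined infinite fixed point $\mathfrak F_m=\lim_k \sigma_m^{\,k}(1)$. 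This already justifies the naming and identifies $\mathfrak F_2$ with the Fibonacci word and $\mathfrak F_3$ with the Tribonacci word.

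Second I would pass to the abelianizations. The incidence matrices are $M_2=\begin{pmatrix}1&1\\1&0\end{pmatrix}$ and $M_3=\begin{pmatrix}1&1&1\\1&0&0\\0&1&0\end{pmatrix}$; a fixed power of each is strictly positive, so both substitutions are primitive, and their characteristic polynomials $x^2-x-1$ and $x^3-x^2-x-1$ have dominant roots (the golden ratio and the Tribonacci constant) that are Pisot numbers. Perron–Frobenius then gives linear recurrence of $\mathfrak F_2$ and $\mathfrak F_3$ and uniform letter frequencies, while the Pisot condition is precisely the hypothesis in the Arnoux–Ito / Rauzy-fractal machinery that yields the self-similar compact tiles, their lattice periodic tilings, and the associated quasiperiodic (model-set) point configurations. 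That is the rigorous content underlying the sentence ``these substitutions have a preeminent role in fractals and quasicrystalline structures.''

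The main obstacle is simply that the last clause is, as stated, a heuristic and historical remark rather than a quantified claim, so the real decision is what to prove: I would commit to establishing exactly the well-definedness plus the primitive-Pisot package above, and then invoke the standard Rauzy-fractal and cut-and-project literature (via the reference cited for the substitution formalism) for the tiling and quasicrystal consequences rather than reconstructing that theory here. No step is technically deep; the balance of labor is almost entirely in fixing definitions and in the eigenvalue computation.
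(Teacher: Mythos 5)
Your diagnosis of the statement is the right one, and it is worth saying plainly: the paper itself offers no proof of this ``proposition'' --- it is imported by citation from the reference on Fibonacci chains and is really a naming convention (``$\sigma_2$ is called the Fibonacci substitution, $\sigma_3$ the Tribonacci substitution'') followed by a historical remark about fractals and quasicrystals. There is therefore no argument in the paper to compare yours against. What you propose --- pinning down $\sigma_m\colon i\mapsto 1(i{+}1)$ for $i<m$, $m\mapsto 1$, checking non-erasure and the existence of the fixed point $\lim_k\sigma_m^k(1)$, then verifying primitivity of the incidence matrix and that the dominant root of $x^m-x^{m-1}-\cdots-x-1$ is Pisot, and delegating the Rauzy-fractal and model-set consequences to the literature --- is a sound and honest reconstruction of the only provable content the sentence carries. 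It is also consistent with the paper's own material: your guessed definition of $\sigma_3$ matches the Tribonacci example (where $\sigma_3(1)=12$, $\sigma_3^2(1)=1213$, $\sigma_3^3(1)=1213121$ force $\sigma_3(2)=13$ and $\sigma_3(3)=1$), and your eigenvalue computation reproduces exactly the later cited proposition asserting that $\sigma_m$ is Pisot with minimal polynomial $\lambda^m-\lambda^{m-1}-\cdots-\lambda-1$. The one caveat to record is that the paper never actually defines $\sigma_m$, so any proof must, as you do, first supply the definition; and the final clause about ``preeminent role'' is not a mathematical assertion, so your decision to prove the primitive--Pisot package and cite the tiling literature for the rest is the correct scoping of the problem rather than a gap.
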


We use the symbol $|w|$ to denote the length of a finite word and we define the \emph{weight} $|w|_i$ of a word $w$ with respect to the $i$-th letter of $\mathcal A$, namely the number of occurrences of the letter $a_i$ in the word $w$. 
\begin{definition}[Uniform frequency]\cite{path1}
Let $\mathbf w$ be an infinite (bi-infinite) word and let $i=1,\dots,m$. If for all $k\geq 0$ ($k\in \mathbb{Z}$) the limit
$$\lim_{n\to\infty}\frac{|w_{k+1}\cdots w_{k+n}|_j}{n}$$
exists uniformly with respect to $k$, then it is called the \emph{uniform frequency $f_j(\mathbf w)$ of the digit $a_j$ in $\mathbf w$}. Equivalently, if above limit exists, we can define
$$f_j(\mathbf w):=\lim_{n\to \infty} \frac{\sup\{|w|_j\mid w \text{ is a subword of length $n$ of $\mathbf w$}\}}{n}.
$$
\end{definition}
\begin{definition}\cite{path1}
The \emph{Perron-Frobenious eigenvalue} of a substitution is defined as the largest eigenvalue of its adjacency matrix. We denote by $\rho_m$ the Perron-Frobenious eigenvalue of $\sigma_m$. 
A substitution $\sigma$ is a \emph{Pisot substitution} if its Perron-Frobenious eigenvalue is a Pisot number, namely an algebraic integer greater than $1$ whose conjugates are less than $1$ in modulus. 
\end{definition}
\begin{proposition}\cite{path1}\label{p11}
    For all $m\geq 2$, $\sigma_m$ is a Pisot substitution. 
    
    In particular, the {Perron-Frobenious} eigevanlue $\rho_m$ of $\sigma_m$ is the Pisot number whose minimal polynomial  $\lambda^m-\lambda^{m-1}-\cdots-\lambda-1$. Moreover the vector
    $$d_m:=(\rho_m^{-1},\dots,\rho_m^{-m}) $$
    is a left eigenvector associated to $\rho_m$ whose $\ell_0$ norm is equal to $1$. 
\end{proposition}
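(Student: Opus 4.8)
The plan is to reduce everything to elementary facts about the polynomial $p_m(\lambda):=\lambda^m-\lambda^{m-1}-\cdots-\lambda-1$ and its companion matrix. Recall that $\sigma_m$ is the standard $m$-bonacci substitution on $\mathcal{A}_m=\{1,\dots,m\}$, given by $\sigma_m(i)=1\,(i+1)$ for $1\le i\le m-1$ and $\sigma_m(m)=1$. First I would write down its incidence matrix $M_m$, with $(M_m)_{ij}$ the number of occurrences of the letter $j$ in $\sigma_m(i)$: it has first column $(1,\dots,1)^{\mathsf T}$, ones on the superdiagonal, and zeros elsewhere (the order of letters inside each image is immaterial here). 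Thus $M_m^{\mathsf T}$ is precisely the companion matrix of $p_m$, so $\det(\lambda I-M_m)=p_m(\lambda)$ and every eigenvalue of $M_m$ is a root of $p_m$. Next I would observe that $M_m$ is primitive: its digraph is strongly connected (every vertex has an edge to $1$, and $1\to2\to\cdots\to m$) and carries the loop $1\to1$. By the Perron--Frobenius theorem its spectral radius $\rho_m$ — the Perron--Frobenius eigenvalue of $\sigma_m$ — is a positive simple eigenvalue that strictly dominates every other eigenvalue in modulus; and since the row sums of $M_m$ are $1$ and $2$ while $p_m(1)=1-m\neq0$ and $p_m(2)=1\neq0$, we get $\rho_m\in(1,2)$.

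To prove the Pisot property I would show that every root $\lambda\neq\rho_m$ of $p_m$ satisfies $|\lambda|<1$. The key identity is $(\lambda-1)\,p_m(\lambda)=\lambda^{m+1}-2\lambda^m+1=:q(\lambda)$; since $p_m(1)\neq0$, any root $\lambda$ of $p_m$ satisfies $\lambda^{m+1}=2\lambda^m-1$, and the reverse triangle inequality gives $r^{m+1}\ge 2r^m-1$, i.e.\ $q(r)\ge0$, for $r:=|\lambda|$. From $q'(r)=r^{m-1}\big((m+1)r-2m\big)$ one checks that $q$ vanishes at $1$, is negative on $(1,\rho_m)$, and is positive on $(\rho_m,\infty)$ — in particular $\rho_m$ is the unique root of $q$ in $(1,\infty)$ — so $q(r)\ge0$ together with $r\ge1$ forces $r=1$ or $r\ge\rho_m$. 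If $\lambda\neq\rho_m$ then $r<\rho_m$ by the strict Perron--Frobenius dominance, hence $r=1$; but then $|2\lambda^m-1|^2=5-4\,\mathrm{Re}(\lambda^m)=1$ forces $\mathrm{Re}(\lambda^m)=1$ and (as $|\lambda^m|=1$) $\lambda^m=1$, whence $\lambda^{m+1}=2-1=1$ and so $\lambda=1$, contradicting $p_m(1)\neq0$. This settles the claim.

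Finally I would check that $p_m$ is irreducible over $\mathbb{Q}$, so that it is the minimal polynomial of $\rho_m$, making $\rho_m$ genuinely a Pisot number and $\sigma_m$ a Pisot substitution. Note $\rho_m$ is a simple root of $p_m$ (a simple root of $q$, since $q'(\rho_m)\neq0$ as $\rho_m\neq\tfrac{2m}{m+1}$). If $p_m=gh$ with monic $g,h\in\mathbb{Z}[\lambda]$ and $\rho_m$ a root of $g$, then $h$ is monic, all its roots have modulus $<1$ by the previous step, yet $|h(0)|$ is a nonzero integer (since $g(0)h(0)=p_m(0)=-1$); as $|h(0)|=\prod_{\alpha}|\alpha|<1$ whenever $\deg h\ge1$, we must have $\deg h=0$, so $p_m$ is irreducible. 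For the eigenvector claim I would verify $d_m M_m=\rho_m d_m$ directly for $d_m=(\rho_m^{-1},\dots,\rho_m^{-m})$: from the shape of $M_m$, the coordinate-$j$ equations for $2\le j\le m$ are the trivialities $\rho_m^{-(j-1)}=\rho_m\cdot\rho_m^{-j}$, while the first-coordinate equation is $\sum_{i=1}^m\rho_m^{-i}=1$, which after multiplying by $\rho_m^m$ is exactly $p_m(\rho_m)=0$; the same identity gives the stated normalization, the entries of $d_m$ being positive and summing to $1$ (so $\|d_m\|_1=1$). The part needing genuine care is the spectral analysis underlying the Pisot property — the sign pattern of $q$ on $[1,\infty)$ and the exclusion of unimodular roots of $p_m$; recognizing $M_m^{\mathsf T}$ as a companion matrix, checking primitivity, the constant-term irreducibility argument, and the eigenvector identity are all routine.
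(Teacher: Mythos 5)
The paper does not actually prove this proposition --- it is quoted verbatim from the cited reference \cite{path1} with no argument supplied --- so there is no internal proof to compare yours against. Your proof is correct and self-contained, and it is the standard route: identify the incidence matrix of $\sigma_m$, recognize its transpose as the companion matrix of $p_m(\lambda)=\lambda^m-\lambda^{m-1}-\cdots-\lambda-1$, use primitivity and Perron--Frobenius to locate $\rho_m\in(1,2)$ as a simple dominant eigenvalue, pass to $(\lambda-1)p_m(\lambda)=\lambda^{m+1}-2\lambda^m+1$ to rule out roots of modulus $\geq 1$ other than $\rho_m$ (your elimination of unimodular roots via $5-4\,\mathrm{Re}(\lambda^m)=1$ is clean and complete), deduce irreducibility from $|p_m(0)|=1$, and verify the eigenvector identity coordinatewise, the first coordinate reducing to $\sum_{i=1}^m\rho_m^{-i}=1$, i.e.\ $p_m(\rho_m)=0$. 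Two small remarks. First, the statement's ``$\ell_0$ norm'' must be a typo for the $\ell_1$ norm (the number of nonzero entries of $d_m$ is $m$, not $1$); what you prove, $\sum_i \rho_m^{-i}=1$ with positive entries, is the intended normalization. Second, whether $d_m$ is a left or a right eigenvector depends on the indexing convention for the incidence matrix ($(M_m)_{ij}=|\sigma_m(i)|_j$ versus its transpose); you state your convention explicitly and the computation is consistent with it, which is all one can ask given that the source paper's convention is not reproduced here.
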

In 2013, Ramírez, J.L, et al. In~\cite{Ramírez} mention to $k$-Fibonacci Words as The k-Fibonacci words are an extension of the Fibonacci word notion that generalises Fibonacci word features to higher dimensions. These words were investigated for their distinct curves and patterns.
 
Recently, in 2023 Rigo, M., Stipulanti, M., \& Whiteland, M.A. in~\cite{Rigo} mentioned the Thue-Morse sequence, which is the fixed point of the substitution $0\rightarrow 01, 1\rightarrow 10$, has unbounded 1-gap $k$-binomial complexity for $k \geq 2$. Also,  we want to mention for a Sturmian sequence and $g\geq 1$, all of its long enough factors are always pairwise $g-gap k$-binomially inequivalent for any $k \geq 2$. 
Furthermore, for Fibonacci sequence see trees in~\cite{ref01,ref02}. 
\section{Literature Review}
The Fibonacci word, constructed via the substitution rules $0 \to 01$ and $1 \to 0$, is a well-studied infinite sequence in combinatorics on words. Its density properties—particularly the frequency of symbols—have been analyzed through multiple mathematical lenses. Furthermore see \cite{path2,path3,path4,path5,path6}.
\begin{definition}
The \emph{period length} of the Fibonacci sequence modulo~$p$, denoted $\pi(p)$, is the smallest integer $m \geq 1$ such that $F(n + m) \equiv F(n) \mod p$ for all $n \geq 0$.
The \emph{restricted period length} of the Fibonacci sequence modulo~$p$, denoted $\alpha(p)$, is the smallest integer $m \geq 1$ such that $F(m) \equiv 0 \mod p$.
\end{definition}

Let $L(n)_{n \geq 0}$ be the sequence of Lucas numbers, defined by $L(0) = 2$, $L(1) = 1$, and $L(n + 2) = L(n + 1) + L(n)$ for $n \geq 0$.

\begin{definition}
Let $p$ be a prime, and let $i \in \{0, 1, \dots, \pi(p) - 1\}$.
We say that $i$ is a \emph{Lucas zero} (with respect to $p$) if $L(i) \equiv 0 \mod p$ and a \emph{Lucas non-zero} if $L(i) \nequiv 0 \mod p$.
\end{definition}
\begin{definition}[Density of Symbols]\cite{path2}
The Fibonacci word exhibits a precise asymptotic density of $1$'s equal to $\frac{1}{\phi^2}$, where $\phi = \frac{1+\sqrt{5}}{2}$ is the golden ratio. This arises because the ratio of $0$'s to $1$'s converges to $\phi$. 
\end{definition}
Crucially, this density is uniform\footnote{See: \url{https://mathoverflow.net/questions/323614/is-the-density-of-1s-in-the-fibonacci-word-uniform}.}: for any position $c$ and interval length $m$, the proportion of $1$'s in the substring $ \omega_{c-m}^{c+m} $ converges uniformly to $\frac{1}{\phi^2}$ as $m \to \infty$. 
\begin{proposition}\cite{path3}
For any substring of length $n$,  
$$
\left| \frac{\text{number of } 1\text{'s}}{n} - \frac{1}{\phi^2} \right| \leq \frac{1}{n},
$$  
ensuring the density becomes increasingly homogeneous for large $n$.  
\end{proposition}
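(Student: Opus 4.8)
The plan is to use the classical fact that the Fibonacci word is Sturmian, which gives it an arithmetic ``mechanical word'' description turning the number of $1$'s in a factor into a telescoping sum of floor functions. Write $\alpha:=1/\phi^{2}=2-\phi$; this is irrational, lies in $(0,1)$, and is exactly the uniform frequency of the symbol $1$ in the Fibonacci word $\mathbf w=w_{0}w_{1}w_{2}\cdots$: it is the second coordinate $\rho_{2}^{-2}=1/\phi^{2}$ of the normalized Perron eigenvector $d_{2}=(\rho_{2}^{-1},\rho_{2}^{-2})$ of the Fibonacci substitution from Proposition~\ref{p11} (with $\rho_{2}=\phi$), and it is the density recalled above. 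The first step is to record the standard identification of the fixed point of $0\mapsto 01,\ 1\mapsto 0$ with a Sturmian word of slope $\alpha$: there is a real number $\rho$ with
$$w_{n}=\lfloor \alpha(n+1)+\rho\rfloor-\lfloor \alpha n+\rho\rfloor\qquad\text{for all }n\ge 0.$$

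With that in hand the estimate is immediate. Any factor $u$ of $\mathbf w$ of length $n\ge 1$ equals $w_{k}w_{k+1}\cdots w_{k+n-1}$ for some $k\ge 0$ (by definition factors are the finite subwords that occur), so its weight at the letter $1$ telescopes:
$$|u|_{1}=\sum_{j=k}^{k+n-1}\bigl(\lfloor \alpha(j+1)+\rho\rfloor-\lfloor \alpha j+\rho\rfloor\bigr)=\lfloor \alpha(k+n)+\rho\rfloor-\lfloor \alpha k+\rho\rfloor.$$
Now write $\alpha n=\lfloor \alpha n\rfloor+\{\alpha n\}$ with fractional part $\{\alpha n\}$; since $\alpha$ is irrational and $n\ge 1$ we have $0<\{\alpha n\}<1$, and for every real $x$ one has $\lfloor x+\{\alpha n\}\rfloor-\lfloor x\rfloor\in\{0,1\}$. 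Taking $x=\alpha k+\rho$ gives $|u|_{1}\in\{\lfloor \alpha n\rfloor,\ \lfloor \alpha n\rfloor+1\}=\{\lfloor \alpha n\rfloor,\ \lceil \alpha n\rceil\}$, and since $\lfloor \alpha n\rfloor<\alpha n<\lceil \alpha n\rceil$ in either case $\bigl|\,|u|_{1}-\alpha n\,\bigr|<1$. Dividing by $n$ proves $\bigl|\,|u|_{1}/n-1/\phi^{2}\,\bigr|<1/n$, which is the assertion (indeed with strict inequality).

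A second, self-contained route avoids the explicit intercept and uses instead that the Fibonacci word is $1$-balanced, i.e. $\bigl|\,|u|_{1}-|v|_{1}\,\bigr|\le 1$ for all factors $u,v$ with $|u|=|v|$ (a standard property of Sturmian words, provable directly from the substitution). Put $m_{n}:=\min\{|u|_{1}:|u|=n\}$ and $M_{n}:=\max\{|u|_{1}:|u|=n\}$. Splitting a factor of length $a+b$ into its length-$a$ prefix and length-$b$ suffix shows $m_{n}$ is superadditive and $M_{n}$ subadditive, so by Fekete's lemma both $m_{n}/n$ and $M_{n}/n$ converge; balance ($M_{n}\le m_{n}+1$) forces the two limits to coincide, and the common value is the frequency $\alpha=1/\phi^{2}$ (Proposition~\ref{p11}). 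Hence $m_{n}\le \alpha n\le M_{n}$ and $M_{n}\le m_{n}+1$, so any factor $u$ of length $n$ has $|u|_{1}\in[m_{n},M_{n}]\subseteq[\alpha n-1,\alpha n+1]$, and the bound follows.

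The telescoping (respectively, the Fekete) part is routine; the one step that deserves care — rather than being a genuine obstacle — is the first: justifying the mechanical-word representation with slope \emph{exactly} $1/\phi^{2}$ (equivalently, $1$-balancedness together with pinning the frequency of $1$ to $1/\phi^{2}$). Both follow from the Pisot/substitutive structure in Proposition~\ref{p11} and the classical theory of Sturmian words, and the unknown intercept $\rho$ is harmless because it cancels in the telescoping sum. Once that input is in place, everything reduces to the one-line floor-function inequality above.
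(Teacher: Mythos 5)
The paper does not actually prove this proposition: it is quoted as a cited fact (the reference \cite{path3} is a MathOverflow discussion), so there is no in-paper argument to compare yours against. Your proposal is a correct, self-contained proof, and of the standard kind. The telescoping route is sound: once the Fibonacci word is written as a mechanical word $w_{n}=\lfloor \alpha(n+1)+\rho\rfloor-\lfloor \alpha n+\rho\rfloor$ with $\alpha=1/\phi^{2}$, the count of $1$'s in any length-$n$ factor collapses to a difference of two floors, which lies in $\{\lfloor\alpha n\rfloor,\lceil\alpha n\rceil\}$ and hence within $1$ of $\alpha n$ (strictly, since $\alpha n\notin\mathbb{Z}$); dividing by $n$ gives the bound, in fact with strict inequality. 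Your second route via $1$-balancedness plus Fekete's lemma is also correct and has the advantage of not needing the intercept $\rho$ at all, at the cost of only yielding the non-strict bound $\bigl|\,|u|_{1}-\alpha n\,\bigr|\le 1$. The only inputs you take as known --- the mechanical-word representation with slope exactly $1/\phi^{2}$, or equivalently $1$-balancedness together with the frequency value --- are classical properties of the Fibonacci word as a Sturmian sequence; flagging them explicitly, as you do, is appropriate, and either one would need a short standard citation or an induction on the substitution $0\mapsto 01$, $1\mapsto 0$ to make the write-up fully self-contained. Your proof therefore supplies something the paper itself omits.
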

Then, we have: 
\begin{enumerate}
    \item \textbf{Critical Factorizations and Structural Properties } 
While not directly about symbol density, the Fibonacci word’s structural regularity influences its density behavior. For example, Fibonacci words longer than five characters have \textbf{exactly one critical factorization} a position where the local period equals the global period[3]. This uniqueness contrasts with palindromes, which have at least two critical points (See~\cite{path2, path4}. 
\item \textbf{Generalized Fibonacci Sequences and Density} Studies on generalized Fibonacci sequences (e.g., $(r, a)$-Fibonacci numbers) explore natural density in number-theoretic contexts. While these sequences have zero natural density in $\mathbb{Z}_{\geq 1}$, such results differ from symbol density in the Fibonacci word, highlighting distinct applications of ``density'' across mathematical domains (See~\cite{path2,path5}).
\end{enumerate}
\begin{definition}
The Fibonacci sequence $F(n)_{n \geq 0}$ is defined by the initial conditions $F(0) = 0$ and $F(1) = 1$ and the recurrence
\begin{equation}\label{recurrence}
	F(n + 2) = F(n + 1) + F(n)
\end{equation}
for $n \geq 0$.
It is well known that $F(n)_{n \geq 0}$ is periodic modulo~$m$.
For example, the Fibonacci sequence modulo~$7$ is
\[
	0, 1, 1, 2, 3, 5, 1, 6, 0, 6, 6, 5, 4, 2, 6, 1, 0, 1, 1, 2, 3, 5, 1, 6, \dots
\]
with period length $16$.
\end{definition}
\begin{definition}
Let $p$ be a prime.
The \emph{limiting density} of the Fibonacci sequence modulo powers of $p$ is
\[
	\dens(p) \colonequal \lim_{\lambda \to \infty} \frac{\lvert\{F(n) \bmod p^\lambda : n \geq 0\}\rvert}{p^\lambda}.
\]
\end{definition}
\begin{example}[Tribonacci word]
The first iterations of $\sigma_3$ on $1$ are
\begin{align*}
&\sigma_3(1)=12;\\
&\sigma_3^2(1)=\sigma_3(12)=\sigma_3(1)\sigma_3(2)=1213;\\
&\sigma_3^3(1)=1213121;\\
&\sigma_3^4(1)=1213121121312. \end{align*}
and for all $k$, $\sigma_3^k(1)$ is a prefix of $\mathbf w_3$, which is a infinite, uniformly recurrent word. The frequencies of the digits $1$, $2$ and $3$ in $\mathbf w_3$ are respectively $\tau^{-1}$, $\tau^{-2}$ and $\tau^{-3}$, where $\tau$ is the Tribonacci constant, namely the greatest positive solution of $\tau^3=\tau^2+\tau+1$. 
\end{example}

\begin{definition}\cite{path00}
     Let $\mathbf{w}=\left(a_{n}\right)_{n \geqslant 0} \in \Sigma^{\mathbb{N}}$. The arithmetic closure of $\mathbf{w}$ is the set

$$
A(\mathbf{w})=\left\{a_{i} a_{i+d} a_{i+2 d} \cdots a_{i+k d} \mid d \geqslant 1, k \geqslant 0\right\} .
$$
\end{definition}

The arithmetic complexity of $\mathbf{w}$ is the function $a_{\mathbf{w}}$ mapping $n$ to the number $\mathrm{a}_{\mathbf{w}}(n)$ of words with length $n$ in $A(\mathbf{w})$.

If $\mathrm{a}_{\mathbf{w}}(k)=(\# \Sigma)^{k}$, then the word $\mathbf{w}$ is said to have full arithmetic complexity. The following statement immediately follows from the definition:

\begin{proposition}~\label{p03}
Let $\mathbf{W} \in \Sigma^{\mathbb{N}}$ and $\# \Sigma=k$. Then for all $n \in \mathbb{N}$ we have

$$
1 \leqslant \mathrm{p}_{\mathbf{w}}(n) \leqslant \mathrm{a}_{\mathbf{w}}(n) \leqslant k^{n} .
$$
\end{proposition}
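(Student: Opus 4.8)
The plan is to prove the three inequalities in the displayed chain separately, each of them being an immediate consequence of the relevant definition, so that no auxiliary construction is required.

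First I would establish the lower bound $1\leq \mathrm{p}_{\mathbf{w}}(n)$. Since $\mathbf{w}=(a_n)_{n\geq 0}$ is an infinite word, its prefix $a_0a_1\cdots a_{n-1}$ is a factor of $\mathbf{w}$ of length $n$; hence $\mathcal{L}_{\mathbf{w}}$ contains at least one word of length $n$, and therefore $\mathrm{p}_{\mathbf{w}}(n)\geq 1$.

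Next I would treat the middle inequality $\mathrm{p}_{\mathbf{w}}(n)\leq \mathrm{a}_{\mathbf{w}}(n)$. The key observation is that taking common difference $d=1$ in the definition of the arithmetic closure $A(\mathbf{w})=\{a_i a_{i+d}a_{i+2d}\cdots a_{i+kd}\mid d\geq 1,\ k\geq 0\}$ recovers precisely the ordinary factors $a_i a_{i+1}\cdots a_{i+k}$ of $\mathbf{w}$. Consequently every length-$n$ factor of $\mathbf{w}$ is an element of $A(\mathbf{w})$ of length $n$, so the set of length-$n$ factors is contained in the set of length-$n$ words of $A(\mathbf{w})$; comparing the cardinalities of the two sets gives $\mathrm{p}_{\mathbf{w}}(n)\leq \mathrm{a}_{\mathbf{w}}(n)$.

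Finally, for the upper bound $\mathrm{a}_{\mathbf{w}}(n)\leq k^n$, I would note that every element of $A(\mathbf{w})$ is a finite word over $\Sigma$, and a word of length $n$ over $\Sigma$ is a function $\{1,\dots,n\}\to\Sigma$, so there are exactly $(\#\Sigma)^n=k^n$ of them; since $\mathrm{a}_{\mathbf{w}}(n)$ counts a subset of these words, $\mathrm{a}_{\mathbf{w}}(n)\leq k^n$. Concatenating the three estimates yields the proposition. As every step is a direct unwinding of a definition, I do not expect any genuine obstacle; the single point that merits an explicit line is the identification of the $d=1$ slices of $A(\mathbf{w})$ with the language $\mathcal{L}_{\mathbf{w}}$, which is exactly what forces the factor complexity to be dominated by the arithmetic complexity.
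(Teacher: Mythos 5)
Your proof is correct and matches the paper's treatment: the paper simply asserts that the proposition ``immediately follows from the definition,'' and your three steps (the length-$n$ prefix witnesses $\mathrm{p}_{\mathbf{w}}(n)\geq 1$, the $d=1$ case of the arithmetic closure shows $\mathcal{L}_{\mathbf{w}}$ injects into $A(\mathbf{w})$, and the count of all length-$n$ words over $\Sigma$ bounds $\mathrm{a}_{\mathbf{w}}(n)$ by $k^n$) are exactly the intended unwinding of those definitions.
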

\subsection{Infinite square-free words}

The infinite word of Thue- Morse has square factors. In fact, the only forecourt-free words over two letters a and b are:$\{a,b,ab,aba,bab\}.$
On the opposite, there will be infinite square-free words over three letters. This will now be shown.
\begin{definition}~\label{d.3.1}
  let $A=\{a,b\}$, and $B=\{a,b,c\}$,  Define a morphism: 
\begin{align*}
    & \delta:B^* \rightarrow  A^*;\\
    & \delta(c)=a,\delta (b)=ab,\delta(a)=abb
\end{align*}
For any infinite word b on $B$, 
$\delta(b)=\delta(b_{0})\delta(b_{1})\dots\delta(b_{n})$
is an endless word on A that is clearly defined that begins with the letter $a$ 
\end{definition}
M. Lothaire in ~\cite{L15} (see also ~\cite{L5, K19}) Consider,
 $a=y_{0} y_{1}\dots y_{n} \dots, \quad where \quad y_{n}\in \{a,ab,abb\}=\delta(B) $ 

Since $bbb$ connects, each an in an is actually followed by no more than two b before a new $a$. The factorization is also special. Consequently, there is a special infinite word $b$ on $B$ such that $\delta(b)=a$ ~\cite{L15}.

Grimm, Uwe in 2001 in ~\cite{L16} introduced some definition are important about Infinite square-free words and bounds as: 
\begin{definition}[Square-Free Words]
    These are sequences that do not contain any consecutive repeated substrings (squares).
\end{definition}
\begin{definition}[Binary Square-Free Words]
    Limited to short sequences like $a,b, ab, ba, aba, bab$.
\end{definition}
\begin{definition}[Ternary Square-Free Words] ~\cite{L18,L30}
    Can be infinitely long, and the number of such words grows exponentially with their length. The language of ternary square-free words is 
\begin{equation*}
    \mathcal{A}=\bigcup_{n\ge 0}\mathcal{A}(n)\subset\Sigma^{\mathbb{N}_0}.
\end{equation*}
\end{definition}

Brandenburg in~\cite{L17} (see \cite{L31}) introduce a definition on bounds as 
\begin{proposition} [Bounds]~\cite{L17,L30} ~\label{t.num}
    The number of ternary square-free words of length $n$ is bounded by $$6 \cdot 1.032^n \leq s(n) \leq 6 \cdot 1.379^n$$ as shown by Brandenburg in 1983, for any $n\ge 3$.  in~\cite{L30} This bound can be systematically improved by
calculating $a(n)$ for as large values of $n$ as possible, from $a(90)=258\,615\,015\,792$, is
\begin{equation*}
s\le{43\,102\,502\,632}^{\frac{1}{88}}=1.320\,829\,\ldots
\end{equation*}
 The value $a(110)$ yields an improved upper bound of
\begin{equation*}
s\le {8\,416\,550\,317\,984}^{\frac{1}{108}}=1.317\,277\,\ldots
\end{equation*}
\end{proposition}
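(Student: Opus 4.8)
The plan is to split the double inequality and, for each side, reduce the asymptotic claim to a finite computation driven by the submultiplicativity of $s(n)$, the number of ternary square-free words of length $n$ (the same quantity written $a(n)$ above). First I would record the structural fact: if $uv$ is square-free then so are $u$ and $v$, whence $w\mapsto(\text{prefix of length }m,\ \text{suffix of length }n)$ injects the square-free words of length $m+n$ into the product of those of lengths $m$ and $n$, giving $s(m+n)\le s(m)\,s(n)$. By Fekete's lemma $\mu:=\lim_n s(n)^{1/n}$ exists and equals $\inf_{n\ge1}s(n)^{1/n}$, so $s(n)\ge\mu^{\,n}$ for all $n$ and $s(N)^{1/N}\ge\mu$ for every $N$; this single fact will power both bounds.

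For the upper bounds I would proceed as follows. Since $\inf_n s(n)^{1/n}=\mu<1.379$, pick a modest $N_0$ with $s(N_0)^{1/N_0}<1.379$; writing $n=qN_0+r$ and iterating submultiplicativity gives $s(n)\le s(N_0)^{q}s(r)\le C\cdot 1.379^{\,n}$, and I would absorb the constant into the leading $6$ after a direct check of the short initial segment (here $s(2)=6$, $s(3)=12$ help). The sharper constants $1.320829\ldots$ and $1.317277\ldots$ then come from the very same inequality $\mu\le s(N)^{1/N}$ evaluated at the largest $N$ one can reach: I would run a depth-first traversal of the tree of square-free prefixes, pruning a branch the moment a square appears — to test an extension it suffices to look for a square occurring as a \emph{suffix} — and count the nodes at depth $N$; this is exactly how $a(90)$ and then $a(110)$ are produced, and since these values $s(N)^{1/N}$ decrease to $\mu$ along multiples of any fixed length, larger $N$ yields a sharper estimate, as the statement notes.

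For the lower bound I cannot just count one-letter extensions, because a square-free word need not be right-extendable; instead I would produce square-free words in bulk by a substitution with built-in choices — a Brinkhuis-type pair, in the style of Brandenburg. Concretely I would exhibit morphisms $g_0,g_1\colon\{1,2,3\}^*\to\{1,2,3\}^*$ of a common length $\ell$ so that for every square-free $w=w_1\cdots w_k$ and every $\varepsilon\colon\{1,\dots,k\}\to\{0,1\}$ the word $g_{\varepsilon(1)}(w_1)\cdots g_{\varepsilon(k)}(w_k)$ is again square-free. Iterating from a fixed square-free seed of length $L$, after $j$ steps I get at least $2^{\,L(\ell^{j}-1)/(\ell-1)}$ distinct square-free words of length $\ell^{j}L$, hence $\mu\ge 2^{1/(\ell-1)}$; a pair with $\ell\le 22$ (or, with three choices in place of two, $\ell$ up to about $35$) already makes $2^{1/(\ell-1)}>1.032$. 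Finally, since $1.032<\mu$ strictly, $6\cdot1.032^{\,n}\le\mu^{\,n}\le s(n)$ holds for all large $n$, and I would verify the remaining small $n=3,4,\dots$ against the table of $s(n)$.

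The hard part will be the lower bound — precisely, checking that the chosen pair $(g_0,g_1)$ is square-free-preserving under \emph{arbitrary} mixing; this is the one step that is not soft. I would tame it with a Crochemore-type reduction: a hypothetical square in a mixed image, once synchronised against the block structure of $g_0$ and $g_1$, forces a square inside $g_\varepsilon$ applied to a square-free word of length at most a small absolute bound, so it is enough to check the mixing property on those few short words. That is a finite but case-heavy verification, and the only real craft is to make $\ell$ as small as possible so that $2^{1/(\ell-1)}$ (or $3^{1/(\ell-1)}$) clears the target $1.032$; everything else is bookkeeping with submultiplicativity and Fekete's lemma.
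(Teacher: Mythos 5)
The paper offers no argument for this proposition at all: it is quoted verbatim from Brandenburg \cite{L17} and Grimm \cite{L30}, so there is no in-paper proof to compare against. Judged on its own, your outline is the right skeleton for the crude Brandenburg bounds: submultiplicativity $s(m+n)\le s(m)s(n)$ plus Fekete for the upper bound $6\cdot 1.379^n$, and a square-free-preserving Brinkhuis-type pair (with the Crochemore-style synchronisation reducing the verification to short words) for the lower bound $6\cdot 1.032^n$ with $1.032\approx 2^{1/22}$. That part is sound.

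However, there is a genuine gap in how you reach the two \emph{improved} upper bounds. Plain Fekete gives only $\mu\le a(N)^{1/N}$, which at $N=90$ evaluates to $a(90)^{1/90}\approx 1.339$ and at $N=110$ to about $1.332$ --- not the stated $1.320\,829\ldots$ and $1.317\,277\ldots$. The quoted constants are $\bigl(a(90)/6\bigr)^{1/88}$ and $\bigl(a(110)/6\bigr)^{1/108}$: both the division by $6$ and the drop of the exponent from $N$ to $N-2$ require a sharper inequality than $s(m+n)\le s(m)s(n)$, namely
$$a(m+n-2)\;\le\;\frac{a(m)\,a(n)}{6},$$
obtained by decomposing a square-free word of length $m+n-2$ into a length-$m$ prefix and a length-$n$ suffix overlapping in a square-free $2$-letter block $y$, and using that the $S_3$-action permuting the alphabet acts freely and transitively on the six possible $y$, so exactly $a(n)/6$ square-free words of length $n$ begin with any prescribed $y$. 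Then $b(n):=a(n+2)/6$ is submultiplicative and Fekete applied to $b$ yields $\mu\le\bigl(a(N)/6\bigr)^{1/(N-2)}$, which is Grimm's bound. Without this refinement your method simply does not produce the numbers in the statement. A secondary soft spot: deducing $6\cdot 1.032^n\le s(n)$ from $\mu\ge 2^{1/21}$ forces the crossover $\bigl(2^{1/21}/2^{1/22}\bigr)^n\ge 6$ only for $n$ in the thousands, far beyond any table of $s(n)$; the clean route is Brandenburg's direct injection of two square-free extensions of length $+22$ per word, which gives $s(n+22)\ge 2\,s(n)$ and hence the bound for all $n\ge 3$ with only a handful of initial cases to check.
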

\subsection{Ergodic theory}
In mathematics, a sequence $\left(s_{n}\right)_{n \geqslant 0}$ of real numbers is said to be equidistributed or uniformly distributed on a non-degenerate interval $[a, b]$, if the proportion of terms that fall into a sub-interval is proportional to the length of this interval, i.e., if for any sub-interval $[c, d]$ of $[a, b]$ we have

$$
\lim _{n \rightarrow \infty} \frac{\#\left(\left\{s_{1}, \ldots, s_{n}\right\} \cap[c, d]\right)}{n}=\frac{d-c}{b-a} .
$$

The theory of uniform distribution modulo 1 deals with the distribution behavior of sequences of real numbers.

\begin{definition}
    A sequence $\left(a_{n}\right)_{n \geqslant 0}$ of real numbers is said to be equidistributed modulo 1 or uniformly distributed modulo 1 if the sequence of fractional parts of $\left(a_{n}\right)_{n \geqslant 0}$ is equidistributed in the interval $[0,1]$.
\end{definition}

\begin{theorem}[Weyl's criterion\cite{path01}]
 A sequence $\left(a_{n}\right)_{n \geqslant 0}$ is equidistributed modulo 1 if and only if for all non-zero integers $N$,

$$
\lim _{n \rightarrow \infty} \frac{1}{n} \sum_{j=1}^{n} \mathrm{e}^{2 \pi \mathrm{i} N a_{j}}=0.
$$
\end{theorem}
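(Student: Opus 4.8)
The plan is to prove both implications by reducing equidistribution modulo $1$ to a statement about Ces\`aro averages of test functions, and then approximating indicator functions of subintervals by trigonometric polynomials.

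First, the ``only if'' direction. Assume $(a_n)_{n\geq 0}$ is equidistributed modulo $1$. By definition, for every subinterval $[c,d]\subseteq[0,1]$ the averages $\frac1n\sum_{j=1}^n \mathbf 1_{[c,d]}(\{a_j\})$ converge to $d-c=\int_0^1 \mathbf 1_{[c,d]}(x)\,dx$. By linearity this extends to all step functions, and then, sandwiching an arbitrary Riemann-integrable $1$-periodic function $f$ between step functions that differ by arbitrarily little in integral, one obtains $\frac1n\sum_{j=1}^n f(\{a_j\})\to\int_0^1 f(x)\,dx$. Applying this to the real and imaginary parts of $f(x)=e^{2\pi i N x}$, whose integral over $[0,1]$ vanishes for every nonzero integer $N$, yields Weyl's sums.

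Now the ``if'' direction, which is the substantive one. Assume $\frac1n\sum_{j=1}^n e^{2\pi i N a_j}\to 0$ for every nonzero integer $N$; the case $N=0$ is trivial since the average equals $1=\int_0^1 1\,dx$. By linearity, for every trigonometric polynomial $P(x)=\sum_{k=-m}^m c_k e^{2\pi i kx}$ we get $\frac1n\sum_{j=1}^n P(\{a_j\})\to c_0=\int_0^1 P(x)\,dx$. Fix $[c,d]\subseteq[0,1]$ and $\varepsilon>0$. The key step is to produce trigonometric polynomials $P_-\leq \mathbf 1_{[c,d]}\leq P_+$ (as $1$-periodic functions on $\mathbb R$) with $\int_0^1(P_+-P_-)\,dx<\varepsilon$: first replace $\mathbf 1_{[c,d]}$ by continuous piecewise-linear $1$-periodic functions bounding it above and below to within $\varepsilon/3$ in $L^1$, then invoke the Weierstrass (equivalently Fej\'er) approximation theorem to approximate each of these continuous functions uniformly within $\varepsilon/3$ by a trigonometric polynomial, and finally add a small constant of size comparable to the uniform error to restore the one-sided inequalities. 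Given such $P_\pm$,
\[
\limsup_{n\to\infty}\frac1n\sum_{j=1}^n \mathbf 1_{[c,d]}(\{a_j\}) \leq \lim_{n\to\infty}\frac1n\sum_{j=1}^n P_+(\{a_j\}) = \int_0^1 P_+(x)\,dx \leq (d-c)+\varepsilon,
\]
and symmetrically $\liminf_{n\to\infty}\frac1n\sum_{j=1}^n \mathbf 1_{[c,d]}(\{a_j\})\geq (d-c)-\varepsilon$. Letting $\varepsilon\to 0$ shows the limit exists and equals $d-c$, so $(a_n)$ is equidistributed modulo $1$.

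The main obstacle is the one-sided trigonometric approximation of the discontinuous function $\mathbf 1_{[c,d]}$: a direct uniform approximation is impossible because of the two jumps, so one must interpose continuous bounding functions, control the $L^1$ error they introduce near the discontinuity points, and then verify that shifting the Fej\'er approximants by an additive constant comparable to the uniform error restores $P_-\leq \mathbf 1_{[c,d]}\leq P_+$ without spoiling the bound on $\int_0^1(P_+-P_-)\,dx$. Everything else is routine bookkeeping with linearity and Ces\`aro limits.
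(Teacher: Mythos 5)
Your proof is correct and is essentially the classical argument from the source the paper cites (Kuipers--Niederreiter); the paper itself states Weyl's criterion as an imported result and supplies no proof, so there is nothing internal to compare against. Both directions are handled properly: the forward direction via sandwiching Riemann-integrable periodic functions between step functions, and the converse via one-sided trigonometric approximation of $\mathbf{1}_{[c,d]}$ through continuous piecewise-linear majorants/minorants, Fej\'er approximation, and a constant shift to restore the inequalities --- which is exactly the standard route.
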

\begin{lemma}\cite{path02}~\label{pa01}
    The fractional part of the sequence $(\log (n!))_{n \geqslant 0}$ is dense in $[0,1]$.\\[0pt]
\end{lemma}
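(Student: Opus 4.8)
The plan is to avoid Weyl's criterion entirely (it would in fact show equidistribution, which is more than needed, but the exponential sum $\sum e^{2\pi i N\log(j!)}$ is awkward to estimate directly) and instead argue by a gap/consecutive-differences argument, which suffices for \emph{density}. First I would set $a_n := \log(n!)$ and observe that $a_{n+1}-a_n = \log(n+1) \to \infty$ as $n\to\infty$, but more importantly that the increments tend to infinity \emph{slowly} relative to their own size: the point is that $\log(n+1)\to\infty$ while the \emph{difference of consecutive increments} $\log(n+2)-\log(n+1)=\log\!\bigl(\tfrac{n+2}{n+1}\bigr)\to 0$. So the sequence of increments marches to infinity in steps that become arbitrarily small.

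The key step is the following elementary observation: if $(a_n)$ is a real sequence such that $a_{n+1}-a_n \to 0^+$ (eventually positive and tending to $0$) and $a_n\to\infty$, then $(a_n \bmod 1)$ is dense in $[0,1]$ — indeed each time the sequence advances by a total of at least $1$ it must land in \emph{every} subinterval of $[0,1)$ of length exceeding the current step size, since it cannot ``jump over'' such an interval. Here, however, $a_{n+1}-a_n=\log(n+1)$ does \emph{not} tend to $0$, so I cannot apply this directly to $(a_n)$ itself. The fix is to pass to the increments: consider instead, for each large threshold, the finer telescoping. Concretely, I would use the auxiliary sequence $b_{n} := \log(n!) = \sum_{k=1}^{n}\log k$ and compare with the integral $\int_1^n \log t\,dt = n\log n - n + 1$; by Stirling, $\log(n!) = n\log n - n + \tfrac12\log n + \tfrac12\log(2\pi) + o(1)$. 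Then I would examine the difference $\log((n+1)!)-\log(n!) = \log(n+1)$ against the fractional-part dynamics: within any block where $n$ ranges over an interval on which $\log(n+1)$ stays within, say, $(m, m+1)$ for an integer $m$, consecutive terms $a_n$ advance by roughly the constant $m + \theta$, and over such a block the fractional parts $\{a_n\}$ behave like an arithmetic-progression-type orbit with a slowly varying step; letting $m\to\infty$ and using that the step $\log(n+1)$ sweeps continuously through all real values $\geq 0$, every target fractional part in $[0,1)$ is approximated arbitrarily well.

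The cleanest route, and the one I would actually write up, is: fix $x\in[0,1)$ and $\varepsilon>0$; choose $n_0$ so large that $\log(n_0+1) > 1/\varepsilon$ is false — rather, so that the \emph{ratio} $\log(n+2)-\log(n+1) < \varepsilon$ for all $n\geq n_0$, i.e.\ $n_0 \geq e^{1/\varepsilon}$. For $n\geq n_0$ the increments $a_{n+1}-a_n$ increase by less than $\varepsilon$ at each step while the partial sums $a_n$ increase without bound; hence the set $\{\,\{a_n\}\cdot(\text{suitable index range}) : n\geq n_0\,\}$ is $\varepsilon$-dense in $[0,1)$, because a strictly increasing sequence with eventual consecutive gaps bounded by $\varepsilon$ and diverging to $+\infty$ hits every interval of length $\varepsilon$ in $[0,1)$ infinitely often (reduce mod $1$: on each unit-length stretch of the $a_n$-values the reduced points cannot skip an interval of length $>\varepsilon$). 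The main obstacle is precisely that $a_{n+1}-a_n\to\infty$, so one must not argue with the raw gaps of $(a_n)$ but with the gaps of the gaps — i.e.\ one needs the \emph{second difference} $\log\frac{n+2}{n+1}\to 0$ to control how the step size drifts, and then invoke the ``slowly drifting large step'' density principle. Once that principle is stated and proved as a small sublemma (a two-line argument about a strictly increasing divergent sequence whose increments have bounded successive differences), the result for $\log(n!)$ follows immediately by plugging in $a_{n+1}-a_n=\log(n+1)$ and $a_{n+2}-a_{n+1}-(a_{n+1}-a_n)=\log\frac{n+2}{n+1}\to0$.
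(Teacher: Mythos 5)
Your proposal correctly isolates the difficulty (the increments $\log(n+1)$ diverge, so the naive ``small steps cannot skip an interval'' argument does not apply to $(a_n)$ directly), but the device you use to get around it --- the ``slowly drifting large step density principle'' --- is exactly the unproved part, and it is not a ``two-line argument about a strictly increasing divergent sequence whose increments have bounded successive differences.'' In that bounded form the principle is simply false: take $a_n=\tfrac{n(n+1)}{2}$, so $d_n=a_{n+1}-a_n=n+1\to\infty$ and $d_{n+1}-d_n=1$ is bounded, yet every $a_n$ is an integer and $\{a_n\}\equiv 0$ is as far from dense as possible. Even in the sharper form you intend ($d_{n+1}-d_n\to 0$), the conclusion does not follow from a jump argument, because the quantity that must be small for that argument is the step of $(a_n)$ \emph{reduced modulo} $1$, and the fractional parts of $\log(n+1)$ are spread over all of $[0,1)$; nothing in your sketch prevents the orbit from repeatedly leaping across a fixed subinterval. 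What you are implicitly invoking --- density (indeed equidistribution) from control of the \emph{second} differences $\Delta^2\log(n!)=\log\frac{n+2}{n+1}\downarrow 0$ with $n^2\,\Delta^2\log(n!)\to\infty$ --- is a genuine theorem (a van der Corput/Fej\'er-type difference theorem), but its proof goes through Weyl's criterion and the van der Corput inequality, i.e.\ precisely the machinery you announced you would avoid. So as written the proof has a gap at its central step.

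For comparison: the paper does not prove the lemma at all (it is quoted from Maxfield's note), and Maxfield's actual argument is an elementary block decomposition that your sketch gestures toward but does not execute correctly. One does not take blocks on which $\log(n+1)$ stays in $(m,m+1)$ --- over such a block the fractional part of the increment sweeps through essentially all of $[0,1)$ --- but rather blocks $b^m\le n\le(1+\delta)b^m$, on which the increment of $\log_b(n!)$ modulo $1$ equals $\log_b(1+ib^{-m})\le\log_b(1+\delta)<\varepsilon$, while the sum of these increments over the block is of order $\delta^2 b^m$, which exceeds $1$ for $m$ large. On such a block the ``small steps, divergent sum'' principle genuinely applies and yields $\varepsilon$-density. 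Replacing your unproved sublemma with this block argument would turn your proposal into the standard proof.
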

\begin{proposition}\label{pa02}
If $k$ is any positive integer having $m$ digits, there exists a positive integer $n$ such that the first $m$ digits of $n$ ! constitute the integer $k$.
\end{proposition}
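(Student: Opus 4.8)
The plan is to convert the digit condition on $n!$ into a statement about the fractional part of $\log_{10}(n!)$ and then quote Lemma \ref{pa01} (with $\log$ read in base $10$). Fix $k$ with exactly $m$ digits, so that $10^{m-1}\le k\le 10^{m}-1$. Observe that ``the first $m$ digits of $n!$ form the integer $k$'' is equivalent to the existence of an integer $t\ge 0$ with $k\cdot 10^{t}\le n!<(k+1)\cdot 10^{t}$, i.e. $\lfloor n!/10^{t}\rfloor=k$: indeed such an inequality forces $n!$ to have exactly $m+t$ decimal digits, whose leading block of length $m$ is $\lfloor n!/10^{t}\rfloor$. Taking base-$10$ logarithms and subtracting the integer $t+(m-1)$, this is the same as requiring
\[
\bigl\{\log_{10}(n!)\bigr\}\in I_k:=\bigl[\log_{10}k-(m-1),\ \log_{10}(k+1)-(m-1)\bigr),
\]
and since $10^{m-1}\le k<k+1\le 10^{m}$ one checks that $I_k$ is a subinterval of $[0,1)$ with nonempty interior.

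Next I would arrange that the exponent $t$ comes out nonnegative. For each $N$ the set $\{\{\log_{10}(n!)\}:n\ge N\}$ is obtained from $\{\{\log_{10}(n!)\}:n\ge 0\}$ by deleting at most $N$ points, and deleting finitely many points from a dense subset of the interval $[0,1]$ (which has no isolated points) again yields a dense set; hence by Lemma \ref{pa01} this tail set is dense in $[0,1]$ for every $N$. Choose $N$ so large that $n!\ge 10^{m-1}$ whenever $n\ge N$. By density of the tail set there is some $n\ge N$ with $\{\log_{10}(n!)\}$ in the interior of $I_k$.

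For this $n$ put $t:=\lfloor\log_{10}(n!)\rfloor-(m-1)$; then $t\ge 0$ by the choice of $N$, and $\log_{10}(n!)-t=(m-1)+\{\log_{10}(n!)\}$ lies strictly between $\log_{10}k$ and $\log_{10}(k+1)$. Exponentiating gives $k<n!/10^{t}<k+1$, hence $\lfloor n!/10^{t}\rfloor=k$, and by the reformulation of the first paragraph the first $m$ digits of $n!$ are exactly the digits of $k$. This completes the proof.

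The only real point requiring care is the passage to arbitrarily large $n$: Lemma \ref{pa01} only asserts density of the full set of fractional parts, so one must note that removing the finitely many ``small'' indices keeps density, which is what guarantees that the factored-out power $10^{t}$ is a genuine right shift (i.e. $t\ge 0$) rather than a forbidden negative exponent. The boundary cases $k=10^{m-1}$ and $k=10^{m}-1$, where one endpoint of $I_k$ equals $0$ or $1$, are harmless since $I_k$ still has nonempty interior; everything else is bookkeeping.
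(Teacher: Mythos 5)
The paper states this proposition without any proof at all (it is Maxfield's theorem, and the surrounding text makes clear the intended ingredient is Lemma~\ref{pa01}), so there is nothing to compare against beyond that intent. Your argument is correct and complete, and it follows exactly the intended route: translate the leading-digit condition into membership of $\{\log_{10}(n!)\}$ in the interval $[\log_{10}k-(m-1),\,\log_{10}(k+1)-(m-1))$, invoke the density from Lemma~\ref{pa01}, and take care (as you rightly do, and as the paper never does) that $n$ can be chosen large enough to make the exponent $t$ nonnegative.
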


Also we can state this result for any arbitrary base. Let $d_{m} \ldots d_{n}$ be a word over $\Sigma_{b}$ with $d_{m} \neq 0$. There exists $n$ such that the base $_{b}$ expansion of $n!$ starts with $d_{m} \ldots d_{n}$.
\subsection{Sturmian Morphisms}
\begin{definition}[The Number of Occurrences]
    If we have the element $a \in A$ and $w \in A^*$ (we write $A^*$ it's the set of all words on A) intends the number of iterations $a$ in $w$ by $|w|_a$ and denotes the number of occurrences of $b$ in $w$ by $|w|_b$.
If $w=a b a a b$, we can put: $|w|_a=3, |w|_b=2.$
\end{definition}
A morphism is a function that maps words (finite sequences) to other words while preserving the structure of the sequences. Sturmian morphisms specifically refer to the transformations that generate Sturmian sequences from simpler words.
\begin{definition}
 The following three morphisms should be defined:
\[
E\left\{\begin{array}{l}
	a \rightarrow b \\
	b \rightarrow a
\end{array} \varphi=\left\{\begin{array}{l}
	a \rightarrow a b \\
	b \rightarrow a
\end{array} \quad \tilde{\varphi}=\left\{\begin{array}{l}
	a \rightarrow b a \\
	b \rightarrow a
\end{array}\right.\right.\right.
\]
Where $\varphi$ is the Fibonacci morphism.   
\end{definition}
 A morphism $\psi=A^* \rightarrow A^*$ is Sturmian if the infinite word $\psi(x)$ is Sturmian for any Sturmian word $x$. F. Mignosi and P. Seebold ~\cite{21} mention to a morphism is $\psi$ is Sturmian if  $\psi\left\lbrace E,\varphi,\tilde{\varphi}\right\rbrace^*$ and only if it’s composed of $E,\varphi,\tilde{\varphi}$ in any number and order. Also, if $\psi(x)$ is a characteristic Sturmian word for every characteristic Sturmian word x, a morphism is standard.

\begin{definition}[Generating Structures]
Sturmian morphisms can be employed to generate infinite families of graphs. By applying these morphisms to base graphs, one can create complex structures that exhibit properties similar to those found in Sturmian sequences, such as balance and aperiodicity.
\end{definition}
\begin{definition}
Strings or sequences that contain subsets of characters that can create palindromic structures are referred to be scattered palindromic. Sporadic palindromic structures are more flexible than standard palindromes, which have tight symmetry requirements. For scattered palindromes, we can define them in terms of their indices. 
\end{definition}
\begin{lemma}
Let be a word $w$ of order $n$, A sequence $s_1, s_2, \ldots, s_k; \quad 1\leq k \leq n$ is scattered palindromic if there exists some mapping such that:
$$
s_i = s_j \quad \text{where } j = k + 1 - i \quad \text{for some } i,j.
$$
\end{lemma}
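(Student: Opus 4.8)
The plan is to unfold the definition of a scattered palindrome stated just above and reduce the claim to the elementary characterization of ordinary palindromes by their reversal involution. Write the scattered subsequence as $s=s_1 s_2\cdots s_k$, where $s_t=w[i_t]$ for some strictly increasing positions $i_1<i_2<\cdots<i_k$ in $\{1,\dots,|w|\}$; thus $s$ is itself a finite word over $\Sigma$ of length $k\leq n$. Recall that a finite word $u=u_1\cdots u_k$ is a palindrome precisely when it coincides with its reversal $\tilde u=u_k u_{k-1}\cdots u_1$, equivalently when $u_t=u_{k+1-t}$ for every $t$. So it suffices to realize the ``mapping'' in the statement as the reversal acting on the index set of $s$.

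Concretely, I would introduce $\rho\colon\{1,\dots,k\}\to\{1,\dots,k\}$, $\rho(t)=k+1-t$. This $\rho$ is a bijection with $\rho\circ\rho=\mathrm{id}$; it has the single fixed point $(k+1)/2$ when $k$ is odd and no fixed point when $k$ is even, and crucially it depends on $k$ alone, not on the chosen embedding $(i_t)$ of $s$ into $w$. The content of the lemma is then that $s$ is scattered palindromic if and only if $s_t=s_{\rho(t)}$ for all $t$, i.e.\ for all $t$ with $1\leq t\leq\lceil k/2\rceil$, the remaining equalities being automatic since $\rho$ is an involution. For the forward direction, if $s$ is a scattered palindrome then by definition $s=\tilde s$ as words, and comparing the $t$-th letters on both sides gives $s_t=s_{k+1-t}=s_{\rho(t)}$. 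For the converse, if $s_t=s_{\rho(t)}$ for all $t$, then writing out $s$ and $\tilde s$ coordinate by coordinate (treating the central letter separately when $k$ is odd) shows they agree in every position, so $s=\tilde s$ and $s$ is a palindrome; since $s$ was obtained as a scattered subword of $w$, it is a scattered palindrome of $w$. The degenerate cases $k=1$ (every single letter qualifies, with $\rho$ the identity) and $k=2$ (which forces $s_1=s_2$) are covered by the same argument.

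The only real obstacle is not a computation but a matter of making precise what ``scattered'' contributes beyond the factor case: the symmetry must be imposed on the abstract index set $\{1,\dots,k\}$ of the subsequence, not on the positions $i_t$ inside $w$ (these need not be symmetric about any center of $w$), and one must confirm that $\rho$ is well defined for every admissible $k$ with $1\leq k\leq n$. Once this bookkeeping is fixed, the equivalence is immediate from the definition of word reversal, and no properties specific to the Fibonacci or Sturmian setting are needed.
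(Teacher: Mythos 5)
The paper states this lemma without any proof at all --- it functions essentially as a definitional restatement of the preceding (informal) definition of scattered palindromic structures --- so there is no argument of the authors' to compare yours against. Your reduction to the reversal involution $\rho(t)=k+1-t$ on the abstract index set of the subsequence is the natural and correct way to make the statement precise, and your observation that the symmetry must live on $\{1,\dots,k\}$ rather than on the embedding positions $i_1<\cdots<i_k$ inside $w$ is exactly the point the paper leaves implicit.

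One substantive issue you should flag rather than silently repair: the lemma as written requires $s_i=s_j$ with $j=k+1-i$ \emph{for some} $i,j$, whereas your proof establishes the characterization with \emph{for all} $t$. These are not equivalent. Under the literal ``for some'' reading the condition is satisfied by every sequence of odd length (take $i=j=(k+1)/2$, giving the vacuous equality $s_i=s_i$), so it cannot characterize palindromicity; for example $s=abc$ would qualify. Your ``for all $t$'' version is the correct condition and is what any reasonable reading of the lemma intends, but since you are proving a statement that differs from the one printed, you should say so explicitly. With that caveat recorded, your argument is complete and, unlike the paper, actually supplies a proof.
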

if we return to Proposition~\ref{t.num} we can notice that clearly, in next example we explain that as: 
\begin{example}
    Let be a word $w=aabb$, The scattered palindromic sub strings include: 
    \begin{enumerate}
        \item Single characters: $a, b$, 
        \item Pairs of identical characters: $aa, bb$, 
        \item Longer combinations that still maintain some palindromic properties: $aba$ (though not present in ``$aabb$'').
    \end{enumerate}
\end{example}

\begin{theorem}
	If we have $w$ any palindrome word, then it’s said that a word $v\in A^*$ is a subword of another word $x\in A^*$ if:	
	$v=a_1 a_2 \ldots a_n, a_i \in A, n \geq 0$ then we have also as: 
 $\exists \quad y_0, y_1, \ldots, \mathrm{y}_n \in \mathrm{A} ; \mathrm{x}=y_0 a_1 y_1 a_2 \ldots y_n a_n .$
	Therefore, we have: 
\begin{align*}
& |p(w)| \leq|s p(w)| ;\forall w \in \Sigma \\
& where \quad s p(w)=\sum_{t=1}^{|w|} s p_t(w) ; \quad \mathrm{t}: \text { length of word } w .\\
\end{align*}
\end{theorem}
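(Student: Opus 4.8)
The plan is to establish $|p(w)|\le |sp(w)|$ by a set inclusion. First I would pin down the notation left implicit in the statement: let $p(w)$ denote the set of distinct palindromic factors of $w$, let $sp(w)$ denote the set of distinct scattered (sporadic) palindromic subwords of $w$ in the sense of the definition preceding the theorem, and for each $t$ let $p_t(w)$ and $sp_t(w)$ be the subsets consisting of the words of length exactly $t$; thus $p(w)=\bigsqcup_{t\ge 0}p_t(w)$ and $sp(w)=\bigsqcup_{t\ge 1}sp_t(w)$, and the quantity $\sum_{t=1}^{|w|}sp_t(w)$ appearing in the statement is nothing but $|sp(w)|$. I would also read ``$\forall w\in\Sigma$'' as ``$\forall w\in\Sigma^{*}$''.

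The heart of the matter is a single observation: being a palindrome, i.e.\ satisfying $a_i=a_{n+1-i}$ for all $i$ when $v=a_1a_2\cdots a_n$, is an intrinsic property of the word $v$ and does not depend on how $v$ is embedded in $w$. Since every factor of $w$ is a fortiori a subword of $w$ — a contiguous occurrence $w[i\cdots j]$ is exactly a scattered occurrence with all internal gaps empty, the case $y_1=\cdots=y_{n-1}=\varepsilon$ of the subword definition recalled in the statement — every palindromic factor of $w$ is in particular a scattered palindromic subword of $w$. Hence $p_t(w)\subseteq sp_t(w)$ for every $t\ge 1$, so $|p_t(w)|\le |sp_t(w)|$, and summing over $t$ yields
\[
|p(w)|=\sum_{t\ge 0}|p_t(w)|\le \sum_{t\ge 1}|sp_t(w)| = |sp(w)|,
\]
where the length-$0$ term $p_0(w)=\{\varepsilon\}$ only makes the inequality slacker, or can be absorbed by adopting the convention $\varepsilon\in sp(w)$.

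The only genuine subtlety, which I expect to be the main obstacle, is definitional rather than combinatorial: one must fix whether $sp_t(w)$ counts \emph{distinct} scattered palindromic subwords or \emph{occurrences} of them. Under the ``distinct words'' reading, the inclusion $p(w)\subseteq sp(w)$ established above finishes the proof at once. Under the ``occurrences'' reading, one instead observes that the map sending an occurrence of a palindromic factor $w[i\cdots j]$ to the position set $\{i,i+1,\dots,j\}$, now viewed as an occurrence of a scattered subword, is injective, and the subword it picks out is the same palindrome, so it is counted by $sp$; the cardinality inequality then follows exactly as before. Either way no further machinery is required — the content of the theorem is precisely that factors are a special case of scattered subwords and that palindromicity is preserved under this identification — so once the counting convention is made explicit the argument is a two-line inclusion.
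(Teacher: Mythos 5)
The paper states this theorem but supplies no proof whatsoever --- the theorem environment is immediately followed by the next subsection --- so there is no argument of the authors' to compare yours against. Your reconstruction is correct and is the natural one: once $p(w)$ is read as the set of distinct palindromic factors and $sp(w)$ as the set of distinct scattered palindromic subwords, the inequality follows from the inclusion $p_t(w)\subseteq sp_t(w)$, because a contiguous occurrence is the special case of a scattered occurrence with all internal gaps empty and palindromicity depends only on the word itself, not on how it sits inside $w$. Your handling of the two ambiguities (distinct words versus occurrences, and the stray length-$0$ term) is careful and both readings go through. One small point worth flagging explicitly: the subword definition as literally printed requires $y_0,\dots,y_n\in A$, i.e.\ single letters, under which a factor would \emph{not} in general be a subword and the inclusion argument would not apply; your proof tacitly (and correctly) repairs this to $y_i\in A^{*}$, which is the standard convention (cf.\ Lothaire) and is clearly what the authors intend, but since the whole theorem rests on factors being a special case of scattered subwords, that correction should be stated as part of the proof rather than assumed. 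With that proviso, your two-line inclusion is a complete proof of a statement the paper leaves unproved.
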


\subsection{Statement of problem}
The infinite word $\mathfrak{F}=\mathfrak{F}_{b}:=\left({ }_{b} f_{n}\right)_{n \geqslant 0}$ is defined by concatenating non-negative base- $b \geqslant 2$ representation of the recursive $n$ !.\\
by concatenating base-10 representation of the recursive $n!$ :

$$
\mathfrak{F}:=\left(f_{n}\right)_{n \geqslant 0}=112624120720504040320 \cdots .
$$

What is the factor complexity of the $\mathfrak{F}$, i.e. $\mathrm{p}_{\mathfrak{F}}(k)$ ? What about arithmetic complexity, i.e., $\mathrm{a}_{\mathfrak{F}}(k)$ ? In fact, this problem can be easily generalized for any natural bases.\\

\section{Main Result}
Actually, the first time we presented the Fibonacci density concept according to the fundamental principles of density was at the 65th Congress of the Moscow Institute of Physics and Technology in 2021 and the results were published in~\cite{path00}. Sturmian words can take on a multitude of equivalent meanings and show a wide range of features; in particular, their palindromic or return word structures can help to distinguish them. All infinite words with exactly $n+1$ distinct subwords of length  $\mathrm{n}$. Glen et al. In~\cite{path7} mention for each $n \in \mathbb{N}$  belong to the family of Sturmian words. If $p(w, n)=n+1, \mathrm{n} \geq 0$ and only if, obviously $p(w, 1)=2$.

If all $n \geq 0$ of an endless word's subwords of length $\mathrm{n}$ are equal to $\mathrm{n}+1$, the word is said to be Sturmian.
The Fibonacci word sequence is generated as follows:

\( S(0) = a, \)
\( S(1) = ab, \)
\( S(2) = aba, \)
\( S(3) = abaab, \)
\( S(4) = abaababa, \)

Below in Figure ~\ref{fig2} representation of these subwords 
\begin{figure}[H]
    \centering
 \begin{tikzpicture}[scale=.9]
 \draw  (3,4)-- (4.44,5.04);
 \draw  (3,4)-- (4.52,3);
 \draw  (4.44,5.04)-- (6,6);
 \draw  (4.44,5.04)-- (6,4);
 \draw  (4.52,3)-- (6,3);
 \draw  (6,6)-- (8,6);
 \draw  (8,6)-- (10,6);
 \draw  (6,4)-- (8,4);
 \draw  (6,3)-- (8,4);
 \draw  (6,3)-- (8,2);
 \draw  (8,2)-- (10,2);
 \draw  (8,4)-- (10,5);
 \draw  (8,4)-- (10,4);
 \draw  (8,4)-- (10,3);
 \draw (2.72,4.6) node[anchor=north west] {$\sigma$};
 \draw (4.14,5.8) node[anchor=north west] {$a$};
 \draw (4.26,3.04) node[anchor=north west] {$b$};
 \draw (5.88,6.8) node[anchor=north west] {$aa$};
 \draw (5.82,4.72) node[anchor=north west] {$ab$};
 \draw (5.94,2.88) node[anchor=north west] {$ba$};
 \draw (8.02,6.8) node[anchor=north west] {$aab$};
 \draw (10.02,6.8) node[anchor=north west] {$aaba$};
 \draw (7.86,4.88) node[anchor=north west] {$bba$};
 \draw (7.7,2.7) node[anchor=north west] {$bab$};
 \draw (9.44,2.7) node[anchor=north west] {$baba$};
 \draw (9.66,3.7) node[anchor=north west] {$abab$};
 \draw (9.62,4.7) node[anchor=north west] {$baab$};
 \draw (9.74,5.7) node[anchor=north west] {$abaa$};
 \begin{scriptsize}
 \draw [fill=white] (3,4) circle (2.5pt);\draw [fill=white] (4.44,5.04) circle (2.5pt);\draw [fill=white] (4.52,3) circle (2.5pt);\draw [fill=white] (6,6) circle (2.5pt);\draw [fill=white] (6,4) circle (2.5pt);\draw [fill=white] (6,3) circle (2.5pt);\draw [fill=white] (8,6) circle (2.5pt);\draw [fill=white] (10,6) circle (2.5pt);\draw [fill=white] (8,4) circle (2.5pt);\draw [fill=white] (8,2) circle (2.5pt);\draw [fill=white] (10,2) circle (2.5pt);\draw [fill=white] (10,5) circle (2.5pt);\draw [fill=white] (10,4) circle (2.5pt);\draw [fill=white] (10,3) circle (2.5pt);\end{scriptsize}\end{tikzpicture}
 \caption{The subwords of the Fibonacci word.}
	\label{fig2}
\end{figure}
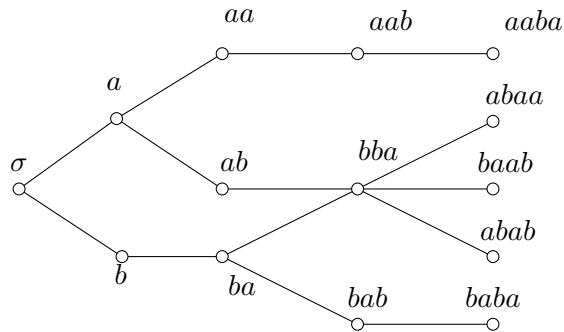
\begin{theorem}\label{main theorem}\cite{path6}
Let $p \neq 2$ be a prime, and define $e = \nu_p(F(p - \epsilon))$.
Let
\[
	N(p)
	= \left\lvert\left\{F(i) \bmod p^e : \text{$i$ is a Lucas non-zero}\right\}\right\rvert,
\]
and let $Z(p)$ be the number of Lucas zeros $i$ such that $F(i) \nequiv F(j) \mod p^e$ for all Lucas non-zeros $j$.
Then
\[
	\dens(p)
	= \frac{N(p)}{p^e}
	+ \frac{Z(p)}{2 p^{2 e - 1} (p + 1)}.
\]
\end{theorem}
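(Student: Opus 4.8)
The plan is to compute $\dens(p)$ by analysing, for each large $\lambda$, the finite image set $S_\lambda := \{F(n) \bmod p^\lambda : n \ge 0\}$ and letting $\lambda \to \infty$. Since $(p-\epsilon)/\alpha(p)$ is coprime to $p$ we have $e = \nu_p(F(p-\epsilon)) = \nu_p(F(\alpha(p)))$, and a standard lifting--the--exponent analysis of the companion matrix $M = \bigl(\begin{smallmatrix}1&1\\1&0\end{smallmatrix}\bigr)$, $M^n = \bigl(\begin{smallmatrix}F(n+1)&F(n)\\F(n)&F(n-1)\end{smallmatrix}\bigr)$, gives $\pi(p^\lambda) = \pi(p)\,p^{\lambda-e}$ for $\lambda \ge e$, together with $M^{\pi(p)} = I + p^e C$ where $C \not\equiv 0 \pmod p$. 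Writing each $n$ in one period uniquely as $n = r + \pi(p)\,t$ with $0 \le r < \pi(p)$ and $0 \le t < p^{\lambda-e}$, we get $S_\lambda = \bigcup_r \mathrm{Fib}_r$ with $\mathrm{Fib}_r := \{F(r+\pi(p)t)\bmod p^\lambda\}$, so it suffices to determine each fiber and run inclusion--exclusion over the $\pi(p)$ values of $r$.

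Expanding $M^{r+\pi(p)t} = M^r(I+p^eC)^t$ and reading off the $(1,2)$-entry yields $F(r+\pi(p)t) = F(r) + p^e P_r(t)$ with $P_r(t) = \sum_{k\ge1}\binom{t}{k} p^{(k-1)e}(M^rC^k)_{1,2}$, a polynomial with $p$-integral coefficients that descends to a self-map of $\mathbb{Z}/p^{\lambda-e}$. The crucial input is the structure of $C$ modulo $p$: diagonalising $M$ as $M \sim \operatorname{diag}(\varphi,\overline{\varphi})$ over $\mathbb{Z}/p$ (or over $\mathbb{F}_{p^2}$ if $5$ is a nonresidue), with $\varphi\overline{\varphi} = -1$ and $\pi(p)$ even so $\overline{\varphi}{}^{\pi(p)} = \varphi^{-\pi(p)}$ and $\varphi^{\pi(p)} = 1 + p^e u + O(p^{2e})$ for a unit $u$, one obtains $C \sim \operatorname{diag}(u,-u) \pmod p$. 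Hence the linear coefficient $(M^rC)_{1,2}$ of $P_r$ is $\equiv \frac{u}{\varphi-\overline{\varphi}}L(r) \pmod p$ --- a unit times $L(r)$ --- while $C^2 \equiv u^2 I$ gives $(M^rC^2)_{1,2} \equiv u^2 F(r) \pmod p$. (For the ramified prime $p=5$, where $\varphi-\overline{\varphi}\equiv 0$ and $M$ reduces to a single Jordan block, the same conclusions --- $(M^rC)_{1,2}$ a unit for every $r$, and $\alpha(5)$ odd so no Lucas zeros --- follow from the Jordan form, and there $\dens(5)=N(5)/5$.)

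Now the dichotomy. If $r$ is a Lucas non-zero, $P_r$ reduces mod $p$ to the linear bijection $t\mapsto (M^rC)_{1,2}t$ and has everywhere-unit derivative, hence is a bijection of $\mathbb{Z}/p^{\lambda-e}$; so $\mathrm{Fib}_r = F(r) + p^e\mathbb{Z}/p^\lambda$, the full residue class of $F(r)$ mod $p^e$. These classes depend only on $F(r)\bmod p^e$, so over all Lucas non-zeros they cover exactly $N(p)$ classes and contribute $N(p)p^{\lambda-e}$ elements, density $N(p)/p^e$. If $r$ is a Lucas zero, then $L(r)=F(2r)/F(r)$ with $\nu_p(F(2r))\ge e$ and $F(r)$ a $p$-adic unit, so $\nu_p(L(r))\ge e$; combined with the fact that $(M^rC^k)_{1,2}$ is a unit for even $k$ and divisible by $p$ for odd $k$ (since $C^{2j}\equiv u^{2j}I$, $C^{2j+1}\equiv u^{2j}C \pmod p$), a short valuation bound on the binomial terms shows every coefficient of $P_r$ is divisible by $p^e$ and that $q_r:=P_r/p^e$ reduces mod $p$ to a \emph{nondegenerate} quadratic (leading coefficient $\equiv \tfrac{u^2}{2}F(r)$). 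Since $P_r=p^e q_r$ takes values mod $p^{\lambda-e}$, the relevant object is the image of $q_r$ on $\mathbb{Z}/p^{\lambda-2e}$, which --- completing the square --- is an affine translate of the set of squares in $\mathbb{Z}/p^{\lambda-2e}$, of density $\frac{p}{2(p+1)}$. Hence $\lvert \mathrm{Fib}_r\rvert = \frac{p}{2(p+1)}p^{\lambda-2e}+o(p^{\lambda-2e})$, density $\frac{1}{2p^{2e-1}(p+1)}$, and $\mathrm{Fib}_r$ lies inside the residue class of $F(r)$ mod $p^{2e}$. A Lucas zero whose value mod $p^e$ coincides with that of some Lucas non-zero contributes nothing new; the remaining $Z(p)$ Lucas zeros lie in classes mod $p^e$ disjoint from all Lucas non-zero classes, and since $\pi(p)\le 2\alpha(p)$ whenever $\alpha(p)$ is even (so there are at most two Lucas zeros, and when there are two their $F$-values are negatives of one another mod $p^e$, hence distinct), their fibers are pairwise disjoint. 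Summing and letting $\lambda\to\infty$ gives $\dens(p)=\dfrac{N(p)}{p^e}+\dfrac{Z(p)}{2p^{2e-1}(p+1)}$.

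The hard part will be the fine structure of $C$ modulo $p^{2e}$ --- equivalently, the precise shape of the lifted Pisano period and the expansion of $\varphi^{\pi(p)}$ inside $\mathbb{Z}_p[\varphi]$ --- since this is what simultaneously yields $C\sim\operatorname{diag}(u,-u)$ (hence $(M^rC)_{1,2}$ a unit multiple of $L(r)$), the bound $\nu_p(L(r))\ge e$ for Lucas zeros, and $\nu_p\bigl((M^rC^2)_{1,2}\bigr)=0$ there; the inert case in particular requires careful work in the quadratic extension. The remaining ingredients are routine: that $\pi(p)$ is even with $\pi(p)/\alpha(p)\in\{1,2,4\}$ (the value $4$ forcing $\alpha(p)$ odd, hence no Lucas zeros); that the squares in $\mathbb{Z}/p^k$ have density $\frac{p}{2(p+1)}$ (from the recursion $a_k=\frac{p-1}{2}p^{k-1}+a_{k-2}$ for $a_k=\lvert\{x^2:x\in\mathbb{Z}/p^k\}\rvert$); and that a polynomial over $\mathbb{Z}/p^k$ with everywhere-unit derivative is a bijection.
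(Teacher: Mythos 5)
First, note that the paper itself contains no proof of this theorem: it is quoted verbatim from Bragman and Rowland \cite{path6}, so there is no in-paper argument to measure your attempt against. Your outline does follow the strategy of that source: decompose one Pisano period modulo $p^{\lambda}$ into fibers $r+\pi(p)\mathbb{Z}$, show that each Lucas non-zero fiber saturates a full residue class modulo $p^{e}$ (contributing $N(p)/p^{e}$), and show that each surviving Lucas zero fiber sits inside a single class modulo $p^{2e}$ with the image structure of the squares in $\mathbb{Z}/p^{\lambda-2e}$, whose density $\tfrac{p}{2(p+1)}$ accounts exactly for the factor $\tfrac{1}{2p^{2e-1}(p+1)}$. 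The bookkeeping is consistent with the stated formula.

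However, as written this is an outline rather than a proof, and you say so yourself: everything that makes the dichotomy work is deferred to ``the hard part.'' Concretely, three things are asserted but not established. (1) The structure of $C$ in $M^{\pi(p)}=I+p^{e}C$: that $C\equiv\operatorname{diag}(u,-u)\pmod p$ with $u$ a unit, hence that $(M^{r}C)_{1,2}$ is a unit multiple of $L(r)$ and $(M^{r}C^{2})_{1,2}$ a unit multiple of $F(r)$. This is precisely the content that separates Lucas zeros from non-zeros, and it rests on the lifting-the-exponent facts $\pi(p^{\lambda})=p^{\lambda-e}\pi(p)$ and $\nu_{p}(F(m\,\alpha(p)))=e+\nu_{p}(m)$, together with a genuinely separate treatment of the split, inert, and ramified ($p=5$) cases; none of this is carried out. (2) The step from ``$q_{r}$ reduces modulo $p$ to a nondegenerate quadratic'' to ``the image of $q_{r}$ on $\mathbb{Z}/p^{\lambda-2e}$ has exactly the density of the squares'': completing the square only handles the quadratic truncation, and for the full higher-degree polynomial you need a unit-derivative reparametrization (Hensel) argument to identify its image with an affine image of the squares; this does work for odd $p$, but it must be stated and proved, since in general a polynomial and its mod-$p$ reduction's lift need not have images of the same size. (3) The disjointness bookkeeping: that there are at most two Lucas zeros per period, that their $F$-values are distinct modulo $p^{e}$, and that a Lucas-zero fiber whose class modulo $p^{e}$ meets a non-zero class is entirely absorbed, so that no overcounting or undercounting occurs in the final sum. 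Until (1)--(3) are written out, the proposal identifies the correct mechanism behind the formula but does not yet prove it.
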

\begin{theorem}
(i) The factor complexity of the infinite word $\mathfrak{F}_{b}$ is full.\\
(ii) The arithmetic complexity of the infinite word $\mathfrak{F}_{b}$ is full.
\end{theorem}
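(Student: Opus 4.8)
The plan is to derive both parts from a single statement: every finite word over $\Sigma_b$ occurs as a factor of $\mathfrak{F}_b$, equivalently $\mathrm{p}_{\mathfrak{F}_b}(k)=b^{k}$ for all $k\ge 0$. The first step is to reduce (ii) to (i): by Proposition~\ref{p03}, $1\le\mathrm{p}_{\mathfrak{F}_b}(k)\le\mathrm{a}_{\mathfrak{F}_b}(k)\le b^{k}$ for every $k$ (the alphabet having $b$ letters), so the moment (i) gives $\mathrm{p}_{\mathfrak{F}_b}(k)=b^{k}$, the sandwich forces $\mathrm{a}_{\mathfrak{F}_b}(k)=b^{k}$ as well. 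Hence it is enough to prove (i).

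For (i), fix $k\ge 1$ (the case $k=0$ is trivial) and an arbitrary word $w\in\Sigma_b^{k}$. The second step is to apply the base-$b$ form of Proposition~\ref{pa02} (the version recorded in the paragraph immediately following it) not to $w$ itself but to the word $1w$ obtained by prepending the digit $1$: since $1w$ has nonzero leading digit, there is an integer $N$ such that the base-$b$ expansion ${}_{b}(N!)$ begins with $1w$. The third step is bookkeeping: by construction $\mathfrak{F}_b$ is the concatenation ${}_{b}f_0\,{}_{b}f_1\,{}_{b}f_2\cdots={}_{b}(0!)\,{}_{b}(1!)\,{}_{b}(2!)\cdots$, so the block ${}_{b}(N!)$ appears in $\mathfrak{F}_b$ as a block of consecutive letters; consequently its prefix $1w$ is a factor of $\mathfrak{F}_b$, and so is the suffix $w$ of that prefix. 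Since $w$ and $k$ were arbitrary, $\mathrm{p}_{\mathfrak{F}_b}(k)=b^{k}$, which is (i), and (ii) follows from the first step.

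The one remaining point is that Proposition~\ref{pa02} does extend to an arbitrary base $b$; I would record this via the base-$b$ analogue of Lemma~\ref{pa01}, namely that $\bigl(\{\log_b(n!)\}\bigr)_{n\ge 0}$ is dense in $[0,1]$. This follows from the elementary fact that a real sequence $(x_n)$ with $x_n\to\infty$ and $x_{n+1}-x_n\to 0$ has fractional parts dense in $[0,1]$, applied to $x_n=\log_b(n!)$, for which $x_{n+1}-x_n=\log_b(1+\tfrac1n)\to 0$ while $x_n\to\infty$; combined with the standard observation that a positive integer begins, in base $b$, with a prescribed digit string of length $L$ encoding the integer $D$ (so $b^{L-1}\le D<b^{L}$) precisely when $Db^{e}\le x<(D+1)b^{e}$ for some $e\ge 0$, i.e. when the fractional part of $\log_b x$ lies in a fixed subinterval of $[0,1]$ of length $\log_b(1+\tfrac1D)>0$, density of $\{\log_b(n!)\}$ supplies the required $N$.

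I do not expect a genuinely hard obstacle: the substance of the theorem is already carried by Lemma~\ref{pa01} and Proposition~\ref{pa02}, and the deduction above is short. The only subtlety worth flagging is the nonzero-leading-digit hypothesis in Proposition~\ref{pa02}: one cannot in general hope to realize an arbitrary $w$ (which may start with $0$) as a \emph{prefix} of some ${}_{b}(N!)$, which is why the argument realizes $w$ instead as an interior factor of the block ${}_{b}(N!)$ by working with $1w$ — this is harmless because only factor-occurrence, not prefix-occurrence, is needed, and the occurrence sits strictly inside a single factorial block, so the seams between consecutive blocks ${}_{b}f_n\,{}_{b}f_{n+1}$ never intervene.
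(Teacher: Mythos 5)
Your overall route is the same as the paper's: part (ii) follows from part (i) via the sandwich $b^{k}=\mathrm{p}_{\mathfrak{F}_b}(k)\leq \mathrm{a}_{\mathfrak{F}_b}(k)\leq b^{k}$ from Proposition~\ref{p03}, and part (i) is reduced to the statement that every digit block with nonzero leading digit is a prefix of some ${}_{b}(N!)$, i.e.\ to the base-$b$ form of Proposition~\ref{pa02}. Your handling of the leading-zero issue --- realizing an arbitrary $w$ as the suffix of $1w$, so that $w$ occurs strictly inside a single factorial block --- is a genuine improvement: the paper's own proof passes over this point, and without it the argument only exhibits those factors that can begin a factorial block.

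However, the one step you chose to prove from scratch is proved incorrectly. For $x_n=\log_b(n!)$ the consecutive differences are $x_{n+1}-x_n=\log_b\bigl((n+1)!/n!\bigr)=\log_b(n+1)$, which tends to infinity, not to $0$; the quantity $\log_b(1+\tfrac1n)$ is the difference for $\log_b n$, not for $\log_b(n!)$. So the elementary criterion ``$x_n\to\infty$ and $x_{n+1}-x_n\to 0$ implies dense fractional parts'' does not apply, and your justification of the base-$b$ analogue of Lemma~\ref{pa01} collapses. The fact itself is true, but the proof (Maxfield's) is subtler: for $n$ ranging just above a power $b^{t}$, the increment $\log_b(n+1)=t+\log_b\bigl((n+1)/b^{t}\bigr)$ is close to the integer $t$, so modulo $1$ the sequence $\log_b(n!)$ advances in steps of size $o(1)$ whose sum over $n\in[b^{t},\,b^{t}+O(b^{t/2})]$ exceeds $1$; hence the fractional parts come within $o(1)$ of every point of $[0,1]$. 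Either supply an argument of this kind or simply cite Lemma~\ref{pa01} together with the paper's remark that it holds in any base; as written, your self-contained justification is wrong even though everything it is meant to support is correct.
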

\begin{proof}
(i) Let the alphabet for base- $b$ is $\Sigma_{b}=\{0, \cdots, b-1\}$. Then $\mathfrak{F}_{b} \in \Sigma_{b}^{\mathbb{N}}$. Now we want to find

$$
k \mapsto \#\left\{f_{i} \cdots f_{i+k-1} \mid i \geqslant 0\right\} .
$$

By Lemma~\ref{pa01} and Proposition~\ref{pa02}, we claim that $\left({ }_{b} f_{n}\right)_{n \geqslant 0}$ is equidistributed modulo 1 . Then we have a same result such Proposition~\ref{pa02}, but for an arbitrary bases, i.e., there exists an $n$ such that the $b$-expansion of $n$ ! begins with these digits. On the other hand, each word $\mathbf{T} \in \Sigma_{b}^{+}$will appearance at least one position in $\mathfrak{F}_{b}$, i.e., $\mathfrak{F}_{b}[i \cdots j]=\mathbf{T}$, because there exist $\mathbf{s} \in \Sigma_{b}^{+}$such that it begins by $\mathbf{T}$.

Hence, $\mathcal{L}_{\mathfrak{F}_{b}}=\Sigma_{b}^{+}$, and $\mathfrak{F}_{b}$ is full factor complexity, i.e.,

$$
\mathrm{p}_{\mathfrak{F}_{b}}(k)=\left(\#\left(\Sigma_{b}\right)\right)^{k}=b^{k} .
$$

(ii) In the previous part we show that $\mathrm{p}_{\mathfrak{F}_{b}}(k)=b^{k}$. Now by Proposition~\ref{p03}, we can say that for all $k \in \mathbb{N}$

$$
b^{k} \leqslant \mathrm{a}_{\tilde{\mathrm{v}}_{b}}(k) \leqslant b^{k} .
$$

This inequality is true for all natural numbers $k$, this implies that $\mathrm{a}_{\mathfrak{F}_{b}}(k)=b^{k}$.
\end{proof}
\begin{lemma}~\label{3.1}
$(a_1, a_2,\dots, a_n); a_i \in A$ And all the words that we formed from A will be denoted by: $A^+=A^* -1$, and we call them the \textit{semi-group $A$}. 
\end{lemma}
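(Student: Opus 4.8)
The plan is to read the statement as the assertion that $A^{+}:=A^{*}\setminus\{\varepsilon\}$, equipped with the concatenation operation, is a semigroup --- indeed the \emph{free semigroup} on the alphabet $A$ --- so that the proof reduces to checking the two defining axioms of a semigroup: closure of $A^{+}$ under concatenation, and associativity of concatenation. Here the symbol ``$1$'' in ``$A^{*}-1$'' denotes the empty word $\varepsilon$, which is the identity of the free monoid $A^{*}$; deleting it is exactly what turns the monoid into a (non-unital) semigroup.

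First I would verify closure. Given $u=a_{1}\cdots a_{m}$ and $v=b_{1}\cdots b_{n}$ with $u,v\in A^{+}$, i.e.\ $m,n\geq 1$, the concatenation $uv=a_{1}\cdots a_{m}b_{1}\cdots b_{n}$ satisfies $|uv|=m+n\geq 2>0$, hence $uv\neq\varepsilon$ and $uv\in A^{+}$. Thus concatenation restricts to a well-defined binary operation $A^{+}\times A^{+}\to A^{+}$.

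Next I would verify associativity, which $A^{+}$ inherits from $A^{*}$. Viewing a word of length $k$ as a function $\{1,\dots,k\}\to A$, one checks that for words $u,v,w$ of lengths $m,n,p$ the words $(uv)w$ and $u(vw)$ are both the function on $\{1,\dots,m+n+p\}$ whose value at position $i$ is the $i$-th letter of $u$ when $i\leq m$, the $(i-m)$-th letter of $v$ when $m<i\leq m+n$, and the $(i-m-n)$-th letter of $w$ otherwise; since the two bracketings yield the same function, $(uv)w=u(vw)$. Together with closure this shows $(A^{+},\cdot)$ is a semigroup. I would also remark that $A^{+}$ carries no identity element --- the only candidate was $\varepsilon$, which has been removed --- so it is genuinely a semigroup and not a monoid, and that its freeness (every map $A\to S$ into a semigroup $S$ extends uniquely to a homomorphism $A^{+}\to S$, by sending $a_{1}\cdots a_{k}$ to the product of the images of its letters) follows immediately.

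There is no substantive obstacle here: the statement is essentially a definition, and the only points requiring any care are the positional bookkeeping in the associativity check and being explicit that excising the identity from a monoid leaves a structure still closed under, and associative with respect to, the ambient operation.
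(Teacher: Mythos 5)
Your reading of the statement is the only sensible one, and your verification is correct: $A^{+}=A^{*}\setminus\{\varepsilon\}$ is closed under concatenation and concatenation is associative, so $A^{+}$ is a (free) semigroup. Be aware, though, that the paper itself supplies no proof of this ``lemma'' --- as written it is a notational definition mislabeled as a lemma, with no actual assertion being argued --- so there is no argument in the paper to compare against. Your proposal fills that gap with the standard textbook argument, and the two points you single out (the positional bookkeeping for associativity, and the observation that removing the identity $\varepsilon$ from the monoid $A^{*}$ leaves a structure that is still closed and associative but no longer unital) are exactly the ones that need to be made explicit. No errors, and nothing missing relative to what the statement can reasonably be taken to claim.
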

 \begin{theorem}
The density of Fibonacci word a particular letter in the word using the following formula:
	$$\lim _{n\rightarrow \infty}D(n)=\lim _{n\rightarrow \infty}(\frac{F(n)}{\operatorname{F(n+1)}})=\varphi-1$$
where $\varphi$ is the golden ratio.
\end{theorem}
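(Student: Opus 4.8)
The plan is to reduce the statement to the classical fact that ratios of consecutive Fibonacci numbers converge to $1/\varphi$, and then to record the algebraic identity $1/\varphi=\varphi-1$. First I would make the combinatorial link explicit: writing $S(n)=\varphi^n(a)$ for the Fibonacci morphism $\varphi\colon a\mapsto ab,\ b\mapsto a$, a one-line induction on the relation $S(n)=S(n-1)S(n-2)$ shows $|S(n)|_a=F(n+1)$ and $|S(n)|_b=F(n)$, so that the frequency ratio $D(n)$ of the letters in the Fibonacci word is exactly $F(n)/F(n+1)$. Hence it suffices to evaluate $\lim_{n\to\infty}F(n)/F(n+1)$, and the whole theorem becomes a statement purely about the Fibonacci numbers defined in \eqref{recurrence}.

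For the core computation I would first establish that the limit exists and then identify its value. Put $r_n=F(n)/F(n+1)$. Using the Cassini identity $F(n)F(n+2)-F(n+1)^2=(-1)^{n+1}$ (proved by induction on $n$ from \eqref{recurrence}), one gets
\[
	r_n-r_{n+1}=\frac{F(n)}{F(n+1)}-\frac{F(n+1)}{F(n+2)}=\frac{F(n)F(n+2)-F(n+1)^2}{F(n+1)F(n+2)}=\frac{(-1)^{n+1}}{F(n+1)F(n+2)}.
\]
Since the Fibonacci numbers grow geometrically, $\sum_n 1/\bigl(F(n+1)F(n+2)\bigr)<\infty$, so $(r_n)$ is Cauchy and therefore converges, say $r_n\to L$ with $0\le L\le 1$. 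Dividing the recurrence $F(n+2)=F(n+1)+F(n)$ by $F(n+1)$ gives $1/r_{n+1}=1+r_n$, i.e. $r_{n+1}(1+r_n)=1$; letting $n\to\infty$ yields $L(1+L)=1$, hence $L^2+L-1=0$ and $L=\tfrac{-1+\sqrt5}{2}$, the positive root. Finally, since $\varphi=\tfrac{1+\sqrt5}{2}$ satisfies $\varphi^2=\varphi+1$, we have $\varphi-1=1/\varphi=\tfrac{-1+\sqrt5}{2}=L$, which is the asserted value. As a shortcut that proves existence and value at once, one may instead substitute Binet's formula $F(n)=(\varphi^n-\psi^n)/\sqrt5$ with $\psi=-1/\varphi$, divide numerator and denominator of $F(n)/F(n+1)$ by $\varphi^n$, and pass to the limit using $|\psi/\varphi|=\varphi^{-2}<1$.

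The only genuine obstacle is the existence of the limit: the fixed‑point equation $L(1+L)=1$ is meaningful only after one knows $(r_n)$ converges, and in particular that the even- and odd-indexed subsequences share a common limit rather than oscillating. I would dispatch this with the Cassini identity together with the geometric lower bound $F(n)\ge c\,\varphi^{\,n}$ for a suitable constant $c>0$, which makes the telescoping estimate above immediate. Everything else — Cassini's identity by induction, solving the quadratic, and the identity $\varphi-1=1/\varphi$ — is routine, so the write-up should be short.
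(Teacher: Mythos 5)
Your argument is correct and complete: the reduction via $|S(n)|_a=F(n+1)$, $|S(n)|_b=F(n)$, the Cassini-based Cauchy estimate, and the fixed-point equation $L(1+L)=1$ together give $\lim_{n\to\infty} F(n)/F(n+1)=1/\varphi=\varphi-1$ with the existence of the limit properly justified before it is used. There is, however, nothing in the paper to compare this against: the theorem is stated there with no proof at all, so your write-up supplies an argument the paper omits entirely. Two small remarks. First, the quantity that is literally the \emph{density} of the letter $a$ in $S(n)$ is $|S(n)|_a/|S(n)|=F(n+1)/F(n+2)$, i.e.\ $r_{n+1}$ in your notation, whereas $F(n)/F(n+1)$ is the ratio of $b$'s to $a$'s in $S(n)$; the two have the same limit, so the conclusion is unaffected, but it is worth one sentence making that index shift explicit so the claimed formula really is about a letter's density. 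Second, for the convergence of $(r_n)$ you do not even need the geometric lower bound $F(n)\ge c\,\varphi^{\,n}$: the differences $r_n-r_{n+1}=(-1)^{n+1}/\bigl(F(n+1)F(n+2)\bigr)$ alternate in sign with absolute values decreasing to zero, so the telescoping series converges by the alternating series test. Either route closes the one genuine gap you identified, and the rest of the write-up is routine as you say.
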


\section{Examples}
\begin{example}\cite{path6}
Let $p = 13$.
The period length is $\pi(13) = 28$, and the restricted period length is $\alpha(13) = 7$.
There are no Lucas zeros.
The set $\{F(0), \dots, F(27)\} \bmod 13$ is $\{0, 1, 2, 3, 5, 8, 10, 11, 12\}$.
Therefore $N(13) = 9$, and $\dens(13) = \frac{9}{13}$.
In fact, $\frac{9}{13}$ is the density of residues attained by the Fibonacci sequence modulo $13^\lambda$ for every $\lambda \geq 1$.
\end{example}

\begin{example}\cite{path6}
Let $p = 19$, for which $\pi(19) = 18 = \alpha(19)$.
The only Lucas zero is $9$.
The set
\[
	\{F(i) \bmod 19 : \text{$0 \leq i \leq 17$ and $i \neq 9$}\} = \{0, 1, 2, 3, 5, 8, 11, 13, 16, 17, 18\}
\]
has size $N(19) = 11$ and does not contain $(F(9) \bmod 19) = 15$.
Therefore $Z(19) = 1$, and $\dens(19) = \frac{11}{19} + \frac{1}{760} = \frac{441}{760}$.
This density is the limit of the decreasing sequence $1, \frac{12}{19}, \frac{210}{361}, \frac{3981}{6859}, \frac{75621}{130321}, \dots$ of densities of residues attained modulo $19^\lambda$ for $\lambda \geq 0$.
\end{example}

\begin{example}\cite{path6}
Let $p = 31$, for which $\pi(31) = 30 = \alpha(31)$.
The only Lucas zero is $15$, but $F(15) \equiv 21 = F(8) \mod 31$.
Therefore $Z(31) = 0$ and $\dens(31) = \frac{N(31)}{31} = \frac{19}{31}$.
\end{example}

\begin{example}\cite{path6}
Let $p = 7$, for which $\pi(7) = 16$ and $\alpha(7) = 8$.
The Lucas zeros are $4$ and $12$.
The set
\[
	\{F(i) \bmod 7 : \text{$0 \leq i \leq 15$ and $i \neq 4, 12$}\} = \{0, 1, 2, 5, 6\}
\]
has size $N(7) = 5$ and does not contain $(F(4) \bmod 7) = 3$ or $(F(12) \bmod 7) = 4$.
Therefore $Z(7) = 2$ and $\dens(7) = \frac{5}{7} + \frac{2}{112} = \frac{41}{56}$.

Level $\lambda$ in the tree contains the residues modulo~$7^\lambda$.
Dotted edges from a residue class $m$ on level $\lambda$ indicate an omitted full $7$-ary tree rooted at that vertex; that is, for every $\gamma \geq \lambda$ and for every integer $k \equiv m \mod 7^\lambda$ there exists $n \geq 0$ such that $F(n) \equiv k \mod 7^\gamma$.
\end{example}

\section{Conclusion}
The study establishes foundational results on the density and complexity properties of Fibonacci words, integrating combinatorial, algebraic, and dynamical perspectives. The Fibonacci word is confirmed as a \textbf{Sturmian word}, characterized by its minimal factor complexity $ p(n) = n + 1 $. This aligns with its generation via substitutions and hierarchical subword structure.  The asymptotic density of a specific letter in the Fibonacci word converges to $ \varphi - 1 \approx 0.618 $, where $ \varphi = \frac{1+\sqrt{5}}{2} $. This arises from the recursive ratio $ \lim_{n \to \infty} \frac{F(n)}{F(n+1)} $, directly tied to the golden ratio. Both factor complexity and arithmetic complexity of the infinite Fibonacci word $ \mathfrak{F}_b $ are proven to be full ($ b^k $ for base $ b $), demonstrating that all possible subwords and arithmetic progressions appear infinitely often.  
For primes $ p \neq 2 $, the density $ \text{dens}(p) $ is derived via counts of Fibonacci residues modulo $ p^e $ and Lucas zeros, combining combinatorial enumeration with modular arithmetic.  
 
These results deepen the understanding of Fibonacci words as prototypical Sturmian systems, with applications in symbolic dynamics, number theory, and coding. The explicit density formulas and complexity proofs provide tools for analyzing pattern distributions in substitutive sequences.

\bibliographystyle{plain}

\end{document}